\newcommand{\Ccal}{\mathcal C}
\newcommand{\Me}{\mathrm Me}
\newcommand{\PP}{\mathbb{P}}
\newcommand{\Pp}{\mathrm P}
\newcommand{\B}{\mathcal B}
\newcommand{\Mspace}{\M_{\N}(0,\infty)}
\newcommand{\Z}{\mathbb{Z}}
\newcommand{\N}{\mathbb{N}}
\newcommand{\M}{\mathcal{M}}
\newcommand{\Gcal}{\mathcal{G}}
\newcommand{\Ncal}{\mathcal{N}}
\newcommand{\R}{\mathbb{R}}
\newcommand{\ER}{Erd\H{o}s-R\'enyi}
\begin{document}



\section{Introduction}

The \ER~(ER) graph with vertex set $[n]:=\{1,\dots,n\}$ is a random graph obtained by connecting any two vertices with probability $p_n\in(0,1)$, independently from all other connections. The resulting graph is said to be \textit{sparse} if $p_n=\omega_n/n$ with $\omega_n \rightarrow \omega\in\R^+$ as $n\to\infty$. It is by now well-known that the sparse ER graph undergoes a phase transition as $\omega$ crosses the critical threshold $\omega=1$, see, e.g., \cite{ER60,ER61}. This phase transition involves the sizes of the connected components of the graph (so-called \textit{clusters}) and it can be summarized as follows: when $\omega<1$, with high probability there is no cluster of size larger than $O(\log(n))$. Clusters of this size are also known as \textit{microscopic} components. When $\omega>1$, with high probability there is a unique connected component of size $O(n)$, also known as the \textit{macroscopic} (or giant) component, and all the remaining components are not larger than $O(\log(n))$.  The central limit theorem for the size of the largest component was proved in \cite{BaBoDe00,Mar98,Pit90}. The first large deviation result in this context can be found in~\cite{OCon98}. Later \cite{Pu05} proved large, moderate and normal deviations for the sequence of the macroscopic component(s) using the so called \textit{method of stochastic processes}, while large deviations for microscopic components are studied in~\cite{BoCa15}. The work \cite{LA21} unifies the two regimes within the framework of large deviations. More precisely, \cite{LA21} 
proves a large deviation principle (LDP) jointly for two empirical measures describing the distribution of, respectively, the macroscopic components and the microcoscopic components. As $\omega$ crosses the critical 
threshold $\omega=1$, the minimizer of the rate function abruptly changes from a configuration with no macroscopic components to a configuration with a unique macroscopic component. 
It is known that, when $\omega=1$, the sizes of the largest components are of order $O(n^{2/3})$. For this reason, these are also known as \textit{mesoscopic} components. In particular, \cite{AL97} shows that the phase transition occurs for all values of $p_n$ inside the critical window $\omega_n(\theta)=1+\theta n^{-1/3}$ for $\theta\in\R$. More precisely, \cite{AL97} shows that the vector of connected component sizes, rescaled by $n^{2/3}$, converges in law to the length of excursions of reflected Brownian motion with negative parabolic drift that depends on $\theta$. It follows that, in the critical window, component sizes do not concentrate around a typical deterministic value, and mesoscopic fluctuations of order $O(n^{2/3})$ occur with positive probability. In~\cite{Pu05}, Puhalskii further clarifies this picture by analysing the atypical behaviour of cluster size \textit{just outside} the critical window. Our work is closely related to~\cite{Pu05} as we also examine the fluctuations of component sizes in this regime; see Section \ref{sectopomn} for a detailed comparison. The novelty of our contribution lies in our approach. In fact, the technique we develop is rather general and is well-suited for the large deviation analysis of more general graphs, such as \textit{inhomogeneous} random graphs, as we now argue.

With the increasing availability of data on complex systems in the last two decades, there has been a growing interest in generalisations of the ER random graph that are better suited to modelling realistic networks. For a detailed introduction to the field, see \cite{vdHof16}. One of the most natural extensions of the ER graph is the \textit{inhomogeneous random graph} (IRG), first introduced in~\cite{Sod03}. The IRG is obtained by assigning to each vertex a \textit{type} (or weight) chosen in some \textit{type space} and then connecting any two vertices with a probability that depends only on their type, independently from all other connections. In~\cite{BoJaRi07} the authors prove that, under appropriate assumptions, the component sizes of the IRG undergo an ER-like phase transition; see also \cite{bet2020big,BhvdHoLe10} for analogous results for other versions of the IRG. In \cite{AnKoLaPa23} the authors extend the combinatorial approach first developed in~\cite{LA21} for the ER random graph to the IRG in order to prove a large deviation principle for the empirical measures of the microscopic and macroscopic components \textit{outside} of the critical window. In a similar fashion, our result for the near-critical fluctuations of the ER random graph is a crucial step towards the analysis of more general models such as the IRG. 
%
While writing this paper we learned of the recent work~\cite{Sun23}, which gives alternative proofs of \textit{other} results from~\cite{Pu05}, by rewriting relevant quantities in terms of Poisson point processes. In future research, it would be interesting to compare the approach in~\cite{Sun23} to ours.

Our main result is an LDP for the largest connected components of an ER graph just outside of the critical window. Our proof is inspired by the approach in~\cite{LA21} and relies on careful combinatorial estimates. It is based on the following three key arguments. First, the probability mass function of the ER graph can be expressed as the product of terms involving the contribution of each connected component. This allows us to identify the influence of microscopic, mesoscopic and macroscopic connected components. Second, the distribution of the microscopic components of maximal mass satisfies a characteristic equation. By studying this equation, we are able to quantify the influence of microscopic components on the total mass. Third, through careful combinatorial analysis, we are able to obtain a precise approximation of the probability that an ER graph in a specific (sparse) regime is fully connected. To the best of our knowledge, this result is novel and of independent interest. Moreover, it is enough to obtain an estimate of the probability mass of each mesoscopic component. The three steps outlined here form the basis of our approach. In our main theorem we prove that these steps are sufficient to obtain the large deviations principle.

A large deviation result similar to ours was proved with considerably different techniques in \cite{Pu05}. The strength of our contribution lies in our proof technique which we sketched above. We believe that this approach could be successfully adapted to obtain large deviations principles for other sparse graphs close to criticality, in particular for the inhomogeneous random graph. 
Nevertheless, it is important to note that our proofs require that $p_n=(1 + \theta (b_n^{2}/n)^{1/3})/n$ is not too close to the critical window (formally, $b_n\gg n^{3/10}$), while the proof in~\cite{Pu05} works for any sequence $p_n$ arbitrarily close to (but still outside of) the critical window. However, our assumption regarding $p_n$ is purely technical. We conjecture that it can be eliminated entirely with even more precise combinatorial estimates, although we chose not to pursue this.
%


The rest of the paper is organized as follows. In Section \ref{secpreliminaries} we present the setting and notation as well as the main theorem. We also give an overview of the proof and we state some intermediate results of independent interest, such as an estimate of the probability that an ER graph of critical size is fully connected and an estimate of the contribution of the microscopic components to the total probability mass. These will be proven, respectively, in Section \ref{Sec:prob_conn} and Section \ref{sectionpetitecomponent}. Section \ref{sec:proof_main_theorem} is dedicated to the proof of the main result. In Section \ref{appendix} we carry out the detailed proof of some technical results, and we give additional details on the topology we consider.

\section{Preliminaries and main result}\label{secpreliminaries}
 We denote by $\mathbb R^*:=\mathbb R \backslash \{0\}$ and $\bar{{\mathbb R}}_+:=\mathbb R_+ \cup \{\infty\}$. Given two eventually non zero 
 sequences $a,b\in \mathbb R ^\N$, the notation $b\preceq a$ means $\liminf_{n \to \infty} \frac{a_n}{b_n}>1$. 

Let $(b_n)_{n \in \N}\in \mathbb{R}^{\N}$ be an increasing sequence satisfying $b_n=o(\sqrt{n})$ and let 
\begin{equation}\label{eq:def_p}
p_n(\theta):=\frac{1+\theta \left(\frac{b_n^2}n\right)^{1/3}+O\left(\left(\frac{b_n^2}n\right)^{2/3}\right)}{n}, \qquad \theta\in\R.
\end{equation} 
%
Note that for any choice $\theta\in\R$, $p_n(\theta)\in(0,1)$ for $n$ large enough. We will consider the \ER~random graph $\mathcal{G}(n,p_n(\theta))$. Denote by $\mathbb{P}_{n,p_n(\theta)}$ the probability law induced by $\mathcal{G}(n,p_n(\theta))$ on the space of simple graphs with $n$ vertices. The edge probability $p_n(\theta)$ lies outside, but arbitrarily close to, the critical window of the ER graph from \cite{AL97}.

Let us consider the sequence of random variables $L^n:=(L^n_k)_{k=1,\ldots,n}$, where 
\begin{align*}%
L_k^n:=\vert\{\mathcal C : \mathcal C\text{ is a connected component of }\mathcal{G}(n,p_n(\theta))\text{ of size } k\}\vert,
\end{align*}%
with $\vert A \vert$ denoting the cardinality of the set $A$. Note that $L^n\in\Ncal_n$, where $\Ncal_n$ is the space of sequences 
\begin{align*}
\Ncal_n:=\{l=(l_k)_{k=1}^n\in\N^n : \sum_{k=1}^nkl_k=n \}.
\end{align*}
In order to understand the behavior of the connected components of the graph, we define a measure that describes components of {mesoscopic} size. The \textit{mesoscopic measure} is the following measure: 
%
\begin{align*}%
\Me_n(L^n):=\sum_{k\in \N}L^n_k\delta_{k/(nb_n)^{2/3}}=\sum_{\Ccal\text{ connected component}}\delta_{|\Ccal|/(nb_n)^{2/3}}.
\end{align*}%
Formally, $\Me_n$ is an element of the space of integer-valued measures with locally finite weights, i.e.
\begin{align*}
\M_{\N}(0,\infty):=\{\mu \in \M_{\N}, \int fd\mu <\infty, \forall f\in C^0_c((0,\infty),\mathbb{R})\},
\end{align*}
where $\M_{\N}$ is the space of integer-valued measures on $(0,\infty)$ and $C_c((0,\infty)$ is the space of continuous real-valued functions with compact support in $(0,\infty)$.
We endow $\M_\N(0,\infty)$ with the vague topology.
Note that any measure $\mu \in \M_{\N}(0,\infty)$ may be written as $\mu:=\sum_{i\in \Z}\delta_{u_i}$, with the convention $1>u_{-1}\geq u_{-2}\geq \dots>0$ and $1\leq u_0 \leq u_1 \leq \dots$ and completed into a full sequence in $\overline{\mathbb{R}}_+^\Z$ by appending elements $0$ and $\infty$. We adopt this notation from now on.  
We are now able to state our main result.
\begin{theorem}\label{thmweakldp}
The law of the random variable $\Me_n(L^n)$ under $\mathbb{P}_{n,p_n(\theta)}$ satisfies a \textit{large deviation principle} with speed $b_n^2$ and rate function $I :\M_\N(0,\infty) \mapsto \bar{\mathbb{R}}_+$ defined by
\begin{equation*}
    I(\mu):=-\frac{1}{24}\int_0^\infty x^3d\mu+\frac{1}{6}(\int_0^\infty xd\mu-\theta)^31_{(\int_0^\infty xd\mu \geq \theta)}+\frac{\theta^3}{6},    
\end{equation*}
whenever the integral $\int_0^\infty xd\mu$ is finite and $I(\mu)=\infty$ otherwise. 
\end{theorem}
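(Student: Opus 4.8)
The plan is to establish the large deviation principle by the standard route of proving a local (weak) LDP together with exponential tightness, which upgrades to a full LDP with a good rate function. Concretely, since $\M_\N(0,\infty)$ with the vague topology is a nice Polish space and the candidate rate function $I$ is easily checked to be lower semicontinuous and to have compact sublevel sets (the leading term $-\tfrac1{24}\int x^3\,d\mu$ controls the mass and the support of $\mu$, once $\int x\,d\mu$ is constrained), it suffices to prove: (i) for every $\mu$ and every sufficiently small vague neighbourhood $U\ni\mu$, $\limsup_{n} b_n^{-2}\log\PP_{n,p_n(\theta)}(\Me_n(L^n)\in U)\le -I(\mu)$; (ii) the matching lower bound $\liminf_{n} b_n^{-2}\log\PP_{n,p_n(\theta)}(\Me_n(L^n)\in U)\ge -I(\mu)$; and (iii) exponential tightness at speed $b_n^2$, which here amounts to showing that the total rescaled mass $\sum_k k L^n_k/(nb_n)^{2/3}$ cannot be too large, equivalently that the vertices outside microscopic components number at most $O((nb_n)^{2/3})$ with overwhelming probability — this follows from a first-moment/exploration bound on the number of vertices in components of size $\gg\log n$ but $\ll n$.

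The heart of the argument is the sharp asymptotics for $\PP_{n,p_n(\theta)}(\Me_n(L^n)\approx\mu)$, for which I would use the three ingredients advertised in the introduction. First, write the probability mass function of $\mathcal G(n,p_n(\theta))$ as a product over the connected components: fixing the sizes $(l_k)$ of all components, the probability is a multinomial coefficient times $\prod_k C(k,p_n)^{l_k}$, where $C(k,p)$ is the probability that $\mathcal G(k,p)$ is connected, times the "no-edge-between-components" factor $(1-p_n)^{(\text{number of non-edges across components})}$. Second, isolate the microscopic part: the bulk of the vertices sits in components of size $O(\log n)$, and by the characteristic-equation analysis referenced in the excerpt (the generating function of the microscopic cluster-size distribution solves a fixed-point equation), the contribution of all microscopic components to the log-probability, after the cross-edge factors are accounted for, produces exactly the term $\tfrac16(m-\theta)^3 1_{m\ge\theta}+\tfrac{\theta^3}{6}$ with $m=\int x\,d\mu$ the rescaled mass carried by the mesoscopic components — this is where the subcriticality "shift" by $\theta$ and the need $b_n\gg n^{3/10}$ enter. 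Third, for each mesoscopic component of rescaled size $x$, i.e. true size $k\approx x(nb_n)^{2/3}$, plug in the fine estimate for $C(k,p_n)$: in this near-critical sparse regime $\log C(k,p_n)\approx -\tfrac1{24} x^3 b_n^2$ (up to lower order), and summing over the finitely many mesoscopic components produces $-\tfrac1{24}\int x^3\,d\mu\cdot b_n^2$. Combining the three contributions and checking that the combinatorial prefactors (multinomial coefficients, choices of vertex labels) are subexponential at speed $b_n^2$ gives the claimed $I(\mu)$.

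For the lower bound one restricts to graphs whose mesoscopic profile is exactly a prescribed finite configuration matching $\mu$ on the neighbourhood $U$ (finitely many components of prescribed sizes, the rest microscopic with the right total mass), and estimates that single event from below using the same product formula; the upper bound requires summing over all profiles $(l_k)$ consistent with $\Me_n(L^n)\in U$ and controlling the entropy of this sum, which is where one must be careful that the number of relevant profiles grows sub-$e^{b_n^2}$. The main obstacle, and the reason for the technical restriction $b_n\gg n^{3/10}$, is obtaining the connectivity estimate $C(k,p_n)$ with enough precision and uniformity over the whole mesoscopic range $k\asymp (nb_n)^{2/3}$: one needs not just the exponential rate $-\tfrac1{24}x^3 b_n^2$ but also that the polynomial and $e^{o(b_n^2)}$ corrections are genuinely negligible, and that the error term $O((b_n^2/n)^{2/3})$ in the definition of $p_n(\theta)$ does not contaminate the leading asymptotics — this is precisely the step that the authors flag as novel and that I expect to be the most delicate, relying on careful enumeration of connected graphs (e.g. via Wright's constants or a direct exploration-process computation) rather than on any soft argument.
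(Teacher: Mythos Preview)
Your overall plan is the paper's: the product formula over components, a micro/meso split with threshold $\alpha_n=\epsilon(nb_n)^{2/3}$, a sharp connectivity estimate for the mesoscopic pieces, a recovery sequence for the lower bound, and control of the number of admissible profiles (cardinality $e^{o(b_n^2)}$) for the upper bound. But the bookkeeping of where each piece of $I(\mu)$ comes from is off in ways that would derail an actual proof.

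The claim $\log C(k,p_n)\approx -\tfrac1{24}x^3 b_n^2$ for $k\sim x(nb_n)^{2/3}$ is wrong twice over. The leading terms of $\log\PP_{k,p_n}(\text{connected})$ are of order $k\log(n/k)\gg b_n^2$; the $x^3 b_n^2$ contribution surfaces only after the connectivity probability is combined with $1/k!$ and the cross-edge factor $(1-p_n)^{(n-k)k/2}$ inside the full component weight $z_k^n(1)$, and when it does appear its sign is $+\tfrac1{24}x^3 b_n^2$ in the log-probability (it is the excess-edge correction to the spanning-tree count). Correspondingly, your attribution of the rate-function terms between micro and meso is garbled. In the paper's decomposition the mesoscopic factor $F_{Me}$ (which absorbs the cross-edge terms between meso and micro) contributes $-\tfrac16(m-\theta)^3-\tfrac{\theta^3}{6}+\tfrac1{24}\int x^3\,d\mu$ on the log scale \emph{without} any indicator, while the microscopic factor $F_{Mi}$ is exactly the probability that an ER graph on $N\sim n$ vertices with \emph{shifted} parameter $\theta'=\theta-m$ has no component of size $\ge\alpha_n$, contributing $-\tfrac16(\theta')^3 1_{(\theta'\ge0)}$; the indicator in $I$ arises only from adding these two. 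Finally, your argument for compact sublevel sets is backwards: $-\tfrac1{24}\int x^3\,d\mu$ is negative and confines nothing; compactness follows from the cubic $\tfrac16(m-\theta)^3$ together with the elementary bound $\int x^3\,d\mu\le m^3$.
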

As the result of the regularity of $I(\cdot)$ on $\M_\N(0,\infty)$ as a good rate function discussed in section \ref{sectopomn}, the above theorem is equivalent to the following statement :
\begin{align*}
\lim_{\delta\to 0}\liminf_{n\to\infty}&\frac{1}{b_n^2}\log \PP_{n,p_n(\theta)}\left(\Me_n(L^n)\in B_{\delta}(\mu)\right)\geq -I(\mu),\\
    \lim_{\delta\to 0}\limsup_{n\to\infty}&\frac{1}{b_n^2}\log \PP_{n,p_n(\theta)}\left(\Me_n(L^n)\in B_{\delta}(\mu)\right)\leq -I(\mu),
\end{align*}
where $B_{\delta}(\mu)$ is a ball of radius $\delta$ around $\mu$ in the vague topology. 

\subsection{The starting point: explicit expression and combinatorics}
We make the following additional assumption.
\begin{hypothesis}\label{hypbn}
    The sequence $(b_n)_{n\in \N}$ satisfies
    \begin{align*}
      n^{3/10}\ll  {b_n}\ll n^{1/2}.
    \end{align*}
\end{hypothesis}
The lower bound in the above assumption implies that our result holds only when $p_n$ is sufficiently distant from the critical window. This is in contrast to the corresponding result in~\cite{Pu05}, which holds for any sequence $p_n$ outside of the critical window. However, Hypothesis \ref{hypbn} is purely technical, and we conjecture that it can be improved, or possibly even removed, with even more precise optimizations. Since this is not the central focus of this work, we choose not to pursue this.

The starting point of our approach is the explicit expression for the probability of observing a given graph under the law $\PP_{n,p}(\cdot)$ of the ER graph $\mathcal{G}(n,p)$. In particular, fixing $l \in \mathcal{N}_n$, we have the following (almost) explicit expression for the probability of the event  $\{L^n=l\}$: 
 
%
\begin{align}\label{eqprimord}
\PP_{n,p}(L^n=l)=n!\prod_{k=1}^n\frac {\PP_{k,p}(L^k_k=1)^{l_k}(1-p)^{\frac12(n-k)kl_k}}{l_k!(k!)^{l_k}}.
\end{align}

The expression of $\PP_{n,p}(L^n=l)$ depends on $\PP_{k,p}(L^k_k=1)$, namely the probability that $\Gcal(k,p)$, a smaller graph, is connected. We aim to exploit this expression to prove Theorem~\ref{thmweakldp}. One crucial element of the proof relies on the product form of \eqref{eqprimord} . We divide the product into three terms, one that collects the contributions of small values of $k$ (formally $k\ll (nb_n)^{2/3}$), one of mesoscopic values $k\propto (nb_n)^{2/3}$ and one of large values $k\gg (nb_n)^{2/3}$. A similar strategy has been used for $p$ outside of the critical window in \cite{LA21}, in order to get a  LDP for the component sizes of $\Gcal(n,\frac{t}n)$, for $t>0$. There, the authors needed precise estimates for the probability $\PP_{k,p}(L^k_k=1)$ in different regimes of $k$ and $p$ and they used results from \cite{Ste70}. In our case, precise asympotics for the probability $\PP_{k,p}(L^k_k=1)$  when $k\propto (nb_n)^{2/3}$ and $p=p_n(\theta)$ are necessary. To our knowledge, these estimates have not been proved previously in the literature, and we derive them in Section~\ref{Sec:prob_conn}. 

Another crucial element in the proof of Theorem~\ref{thmweakldp} is an accurate estimate, with error at most exponentially small, of the probability that there is no component of size $\propto (nb_n)^{2/3}$ and larger. We obtain this with the use of a special series, which plays a crucial role for the graph, details are given in Section~\ref{sectionpetitecomponent}.

\subsection{Proof strategy: splitting contribution of microscopic, mesoscopic and macroscopic components}


The first crucial step in the proof is an almost explicit decomposition of the law of $L^n$ under the probability measure $\PP_{n,p}$. Let
\begin{align}\label{eq:z_k}
    z_k^n(l_k):=\frac {\PP_{k,p}(L^k_k=1)^{l_k}(1-p)^{\frac12(n-k)kl_k}}{l_k!(k!)^{l_k}}.
\end{align}
For any $l\in \mathcal{N}_n$, we first split  \eqref{eqprimord} between the contribution of \textit{microscopic} components of size smaller than $\alpha_n:=\epsilon (nb_n)^{2/3}$ those of \textit{macroscopic} of size greater than $\beta_n:=\lceil n^{17/24}b_n^{8/12}\rceil$ and the remaining \textit{mesoscopic} components. Notice that, while the  threshold $\alpha_n$ is the most natural one to distinguish between microscopic and mesoscopic components, the threshold $\beta_n$ is somehow arbitrary and it does not have a fine interpretation in terms of the graph (any $\beta_n \ll n^{3/4}b_n^{1/2}$ would work). In particular, it is large enough that the estimates for macroscopic components (the ones with size larger than $\beta_n$) work smoothly, but small enough that the same is true for the estimates for the mesoscopic components (the ones with size larger than $\alpha_n$ and smaller than $\beta_n$). This let us rewrite the probability in the following way:

%
\begin{align}
\PP_{n,p}(L^n=l)&=n!\prod_{k=1}^nz_k^n(l_k)\nonumber\\
&=n!\prod_{k=\alpha_n}^nz_k^n(l_k)\prod_{k=1}^{\alpha_n-1}z_k^n(l_k)\nonumber\\
&=e^{\sum_{k=\alpha_n}^n\log(l_k!)}n!\prod_{k=\beta_n}^nz_k^n(1)^{l_k}\prod_{k=\alpha_n}^{\beta_n}z_k^n(1)^{l_k}\prod_{k=1}^{\alpha_n-1}z_k^n(l_k)\label{eqprimord0}.
\end{align}%
%

Recall that $\sum_{k=1}^n kl_k=n$. We immediately see that Hypothesis \ref{hypbn} implies that  $\sum_{k\geq \alpha_n}l_k\leq \frac{n^{1/3}}{\epsilon b_n^{2/3}} = o(b_n^2)$ and that $\sum_{k\geq \alpha_n}\log(l_k!) = o(b_n^2)$ for all $l\in\Ncal_n$. Using Stirling formula for factorial terms, we see that the following holds as well
\begin{align*}
   \frac{n!}{(n-\sum_{k=\alpha_n}^nkl_k)!} &=e^{o(b_n^2)} \left(\frac{n-\sum_{k=\beta_n}^nkl_k}{e}\right)^{\sum_{k=\alpha_n}^{\beta_n}kl_k} \\
   &\quad\times\left(\frac{n-\sum_{k=\beta_n}^nkl_k}{n-\sum_{k=\alpha_n}^{n}kl_k}\right)^{n-\sum_{k=\alpha_n}^{n}kl_k} n^{\sum_{k\geq \beta_n}kl_k}.\notag
\end{align*}
Summarizing from equation \eqref{eqprimord0}, for any $l\in\Ncal_n$ we have
\begin{align}\label{eqprimord2}
\PP_{n,p}(L^n=l)&=e^{o(b_n^2)}F_{Mi}(l)F_{Me}(l)F_{Ma}(l),
\end{align}
with $F_{Mi}(l)$, $F_{Me}(l)$ and $F_{Ma}(l)$ defined respectively as
%
\begin{align}
 F_{Mi}(l)  &=  (n-\sum_{k=\alpha_n}^nkl_k)!(1-p)^{-\frac 12(\sum_{k=1}^{\alpha_n}kl_k)(\sum_{k=\alpha_n}^{n}kl_k)}\prod_{k=1}^{\alpha_n-1}z_k^{n
 }(l_k);\label{eq:F_mi}\\
 &\nonumber\\
 F_{Me}(l)  &= \left(\frac{n-\sum_{k=\beta_n}^nkl_k}{n-\sum_{k=\alpha_n}^nkl_k}\right)^{n-\sum_{k=\alpha_n}^nkl_k}\label{eq:F_me}\\
 & \quad\prod_{k=\alpha_n}^{\beta_n} \left(\frac{n-\sum_{k=\beta_n}^nkl_k}{e}(1-p)^{\frac{1}{2}(\sum_{i=1}^{\alpha_n}il_i)}\right)^{kl_k}(z_{k}^n(1))^{l_k} ;\notag\\
 &\nonumber\\
F_{Ma}(l)&=\prod_{k=\alpha_n}^n n^{kl_k}(z_{k}^n(1))^{l_k}(1-p)^{\frac{1}{2}(\sum_{i=1}^{\alpha_n} il_i)kl_k}.\label{eq:F_ma}
\end{align}


In the above decomposition, we clearly see the contribution to the probability of, respectively, microscopic, mesoscopic and macroscopic components. By abuse of notation, we are omitting the dependence on $n$ and on $p$ of the three above terms. 
 Notice that we manipulated the combinatorial factors in a way that the term $F_{Mi}(l)$ in \eqref{eq:F_mi} corresponds to the probability $\PP_{N,p}(L^N=\hat{l})$, with $N\colon=n-\sum_{i=\alpha_n}^{n}il_i$ and $\hat{l}$ being a $N$-dimensional vector whose first $\alpha_n-1$ components correspond to the ones of $l$ and the remaining are zeros.  Roughly speaking, we will see that the probability mass of an event of the form
\begin{align*}%
\{\Me_n\approx \mu\},\qquad \mu\in \Mspace,
\end{align*}%
is dominated by the term $F_{Me}$ and  $F_{Mi}$. The other term  $F_{Ma}$ will be proved to be of lower 
order, and it can be bounded independently of $\mu$.  Note that the event $\{\Me_n\approx \mu\}$ is a disjoint union of a large number of events $\{L^n=l\}$ for suitable $l\in\Ncal_n$. However, the probability of $\{\Me_n\approx \mu\}$ is dominated by the most likely sequence $l\in\Ncal_n$ (we will call this the \textit{recovery} sequence) in $\{\Me_n\approx \mu\}$. Indeed, we will show that the cardinality of such event does not play a role in the exponential rate of decay of its probability. Accordingly, the central part of the proof (both for the upper and for the lower bound) will revolve around finding the most likely sequence $l\in\Ncal_n$ that belongs to the set $\{\Me_n\approx \mu\}$.

To be precise, we fix $\mu\in \Mspace$ and we want to prove
\begin{align*}
-I(\mu)\leq\lim_{\delta\to 0}\liminf_{n\to\infty}\frac{1}{b_n^2}\log& \PP_{n,p_n(\theta)}\left(\Me_n(L^n)\in B_{\delta}(\mu)\right)\\
&\leq\lim_{\delta\to 0}\limsup_{n\to\infty}\frac{1}{b_n^2}\log \PP_{n,p_n(\theta)}\left(\Me_n(L^n)\in B_{\delta}(\mu)\right)=-I(\mu)
\end{align*}

where $B_{\delta}(\mu)$ is a ball of radius $\delta$ around $\mu$ in the vague topology. 


\subsection{Main ingredients for the proof of Theorem~\ref{thmweakldp}}\label{sec:main_lem}
The main ingredients for the proof of Theorem~\ref{thmweakldp} are the following results, which we list here and we prove in separate sections. We first present a lemma to estimate and bound the size of a given subset of $\mathcal{N}_n$. Then in order to provide the large deviation principle
it will be sufficient to consider asymptotics of $\PP(L^n=l)$ for any $l$ such that $\Me_n(l)$ is close enough to $\mu$.

\begin{lemma}\label{lem:cardinality}
    For every $\mu\in\Mspace$ and every $\delta>0$, under Hypothesis~\ref{hypbn}, we have that
    \[
    |\{l\in\Ncal_n\colon \Me_n(l)\in B_{\delta}(\mu)\}|=e^{o(b_n^2)}.
    \]
\end{lemma}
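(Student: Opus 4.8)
The plan is to bound the cardinality by a crude count that ignores the structure of the ball $B_\delta(\mu)$ entirely. First, observe that the set in question is contained in $\Ncal_n$, and that $\Ncal_n$ is in bijection with the set of integer partitions of $n$: to $l\in\Ncal_n$ we associate the partition having exactly $l_k$ parts equal to $k$, which is well defined precisely because $\sum_{k=1}^n k l_k = n$. Hence $|\Ncal_n| = P(n)$, the number of partitions of $n$, and the elementary bound (or the Hardy--Ramanujan asymptotics) gives $\log P(n) = O(\sqrt n)$.

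The second step is to compare $\sqrt n$ with the speed $b_n^2$. Hypothesis~\ref{hypbn} gives $b_n \gg n^{3/10}$, hence $b_n^2 \gg n^{3/5}$, and since $n^{1/2} = o(n^{3/5})$ we obtain $\sqrt n = o(b_n^2)$. Combining,
\[
\bigl|\{l\in\Ncal_n\colon \Me_n(l)\in B_{\delta}(\mu)\}\bigr|\;\le\;|\Ncal_n|\;=\;P(n)\;\le\;e^{O(\sqrt n)}\;=\;e^{o(b_n^2)},
\]
which is the upper half of the claim. For the lower half it suffices to produce one element of the set for all large $n$, so that the cardinality is at least $1=e^{o(b_n^2)}$: writing $\mu = \sum_i \delta_{u_i}$ with $\sum_i u_i = \int_0^\infty x\,d\mu < \infty$ (the relevant case, since $I(\mu)=\infty$ otherwise), fix a large window $[\eta,R]$, put a single component of size $\lfloor u_i (nb_n)^{2/3}\rfloor$ for each of the finitely many atoms $u_i\in[\eta,R]$, and absorb the residual mass into components of size $1$; the residual is positive for $n$ large since $\sum_i u_i (nb_n)^{2/3}=o(n)$ (because $(nb_n)^{2/3}=o(n)$), and a direct check shows $\Me_n(l)\in B_\delta(\mu)$ once $\eta$ is small, $R$ large and $n$ large --- here one uses that the lattice spacing $(nb_n)^{-2/3}$ vanishes.

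I do not expect a genuine obstacle: the counting is soft, and the argument in fact establishes the stronger statement $|\Ncal_n|=e^{o(b_n^2)}$. The only point requiring a little attention is the last compatibility check between a vague-metric ball $B_\delta(\mu)$ and the vanishing lattice spacing $(nb_n)^{-2/3}$ in the non-emptiness step. As a conceptual remark, this is precisely the estimate that makes the cardinality of a level set $\{\Me_n\approx\mu\}$ irrelevant to the exponential rate of decay, so that throughout the proof of Theorem~\ref{thmweakldp} one may freely replace such a level set by its single most likely (``recovery'') sequence.
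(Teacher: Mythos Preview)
Your argument is correct and follows the same strategy as the paper: ignore the ball $B_\delta(\mu)$ entirely and bound the full set $\Ncal_n$. The difference is only in how the count is carried out. You identify $\Ncal_n$ with the set of integer partitions of $n$ and invoke the elementary (or Hardy--Ramanujan) bound $\log P(n)=O(\sqrt n)$, which is cleaner and gives a slightly sharper exponent. The paper instead argues directly: for $l\in\Ncal_n$ the support $H(l)=\{k:l_k>0\}$ has size at most $2\sqrt n$ (since $n=\sum_{k\in H(l)}kl_k\ge |H(l)|(|H(l)|-1)/2$), and then sums over possible supports and multiplicities to obtain $|\Ncal_n|\le e^{O(\sqrt n\log n)}$. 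Both bounds are $o(b_n^2)$ under Hypothesis~\ref{hypbn}, since $b_n^2\gg n^{3/5}$.

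Your additional remark on non-emptiness is a nice touch, though strictly speaking the lemma is only ever used as an upper bound on the cardinality (in the upper-bound half of the LDP), so the paper does not treat it; your construction is in fact a miniature of the recovery sequence built later in the lower-bound proof.
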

The latter is proved in Section~\ref{appa}.

Then we introduce two  lemmas in which we estimate the contribution of respectively the macroscopic and the mesoscopic components to the probability mass function of $L^n$. Both lemmas are proved in Section~\ref{sec:lemmas_meso_macro}. First, let us focus on the contribution of macroscopic components.

\begin{lemma}\label{lem:macro}
For any $l^n\in \N_n$ such that $l^n_k\geq 1$ for some $k\geq \beta_n$, we obtain the following upper estimate
\begin{align*}
    \log\left(\PP_{n,p_n(\theta)}(L^n=l^n) \right)\leq -\frac 1 {8} \beta_n.
\end{align*}
\end{lemma}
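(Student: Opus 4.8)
Looking at Lemma~\ref{lem:macro}, I need to show that any configuration $l^n$ with at least one macroscopic component (size $\geq \beta_n = \lceil n^{17/24} b_n^{8/12}\rceil$) has probability at most $e^{-\beta_n/8}$.

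=== PROOF PROPOSAL ===

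The plan is to bound $\PP_{n,p_n(\theta)}(L^n=l^n)$ from above using the product formula~\eqref{eqprimord}, and to extract from it just the factors associated with the macroscopic components. Concretely, since all the factors $z_k^n(l_k)$ in~\eqref{eqprimord} are positive, I can bound $\PP_{n,p_n(\theta)}(L^n=l^n)$ by the product of those terms involving $k\geq \beta_n$, together with the crude bound $\PP_{k,p}(L^k_k=1)\leq 1$ and the entropy factor $n!\leq n^n$. More precisely, I would start from the decomposition~\eqref{eqprimord2}–\eqref{eq:F_ma} and focus on the term $F_{Ma}(l^n)$, which collects exactly the $n^{kl_k}(z_k^n(1))^{l_k}(1-p)^{\cdots}$ contributions for $k\geq \alpha_n$, while noting that $F_{Mi}$ and $F_{Me}$ are themselves bounded above by $1$ (being, up to the $e^{o(b_n^2)}$ factor, genuine probabilities — recall the remark that $F_{Mi}(l)=\PP_{N,p}(L^N=\hat l)$). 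Alternatively, and perhaps more cleanly, I can avoid the full decomposition and argue directly: bound $\PP_{n,p}(L^n=l^n)$ by the probability that the specific vertex set of size $k_0\geq\beta_n$ forming one macroscopic component is indeed disconnected from its complement, times $\PP_{k_0,p}(\text{connected})$, times a binomial counting factor $\binom{n}{k_0}$.

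The core estimate is then the following. If a component of size $k_0\geq\beta_n$ is present, the relevant factor in~\eqref{eqprimord} contributes roughly
\[
\binom{n}{k_0}\,\PP_{k_0,p}(L^{k_0}_{k_0}=1)\,(1-p)^{k_0(n-k_0)/2}.
\]
Using $\binom{n}{k_0}\leq (en/k_0)^{k_0}$, $\PP_{k_0,p}(L^{k_0}_{k_0}=1)\leq k_0^{k_0-2}p^{k_0-1}\leq (np)^{k_0}/(np\,k_0^2)$ (spanning-tree bound with $np\approx 1$), and $(1-p)^{k_0(n-k_0)/2}\leq e^{-p k_0(n-k_0)/2}\leq e^{-k_0/2\,(1-k_0/n)}$, the product of these three becomes, after taking logarithms and dividing by $k_0$,
\[
\log\tfrac{en}{k_0}+\log(np)-\tfrac{1}{2}\bigl(1-\tfrac{k_0}{n}\bigr)+o(1)\leq \log n - \log k_0 + 1 - \tfrac12 + o(1),
\]
which for $k_0\geq\beta_n=n^{17/24}b_n^{2/3}\gg n^{17/24}$ makes $\log n-\log k_0\leq -\tfrac{7}{24}\log n + o(\log n)\to-\infty$. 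So the per-vertex cost is very negative, and multiplying back by $k_0\geq\beta_n$ gives a bound like $e^{-c\beta_n\log n}$, which is certainly $\leq e^{-\beta_n/8}$ for $n$ large. I would also need to check that the remaining factors (other components, the $n!/(\dots)!$ entropy, the $l_k!$ denominators) do not overwhelm this gain: the entropy factor $n!$ is already absorbed into the $\binom{n}{k_0}$-type counting for each component, and the point is that even the most generous bound on everything else is $(1+o(1))$-subexponential at speed $\beta_n$, since $\beta_n\gg b_n^2$ by Hypothesis~\ref{hypbn} (indeed $n^{17/24}b_n^{2/3}\gg b_n^2$ is equivalent to $n^{17/24}\gg b_n^{4/3}$, i.e.\ $b_n\ll n^{17/32}$, which holds since $b_n\ll n^{1/2}$).

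The main obstacle I anticipate is handling the interaction between the macroscopic component(s) and everything else cleanly — i.e.\ justifying rigorously that ``all other factors contribute at most $e^{o(\beta_n)}$ (or even $e^{O(n^{1/3}\log n)}$), uniformly over $l^n$.'' The cleanest route is to isolate one macroscopic component of size $k_0\geq\beta_n$, apply the union bound over its $\binom{n}{k_0}$ possible vertex sets and over the $\leq n$ possible values of $k_0$, and bound the conditional probability of the remaining graph by $1$; this trades a sharp identity for a simple inequality but is more than enough here since we only need the bound $-\beta_n/8$, not a matching lower bound. I would conclude by verifying the constants: the dominant saving $-\tfrac{1}{2}k_0(1-k_0/n)\,p$ alone is close to $-k_0/2$ when $k_0=o(n)$, and even allowing for the competing $+\log(n/k_0)+1$ per vertex, for $k_0$ in the range $[\beta_n,\,n]$ one checks the sum stays below $-k_0/8$ for large $n$ (the worst case being $k_0$ of order $n$, where $\log(n/k_0)$ is bounded and $-\tfrac12(1-k_0/n)$ together with the $1$ still leaves a definite negative margin once one accounts for the fact that a giant of size $\Theta(n)$ forces the disconnection exponent $pk_0(n-k_0)/2$ to be $\Theta(n)$). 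Pinning down the constant $1/8$ uniformly over $k_0\in[\beta_n,n]$ is the one place requiring a short but careful case split.
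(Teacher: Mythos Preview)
There is a genuine gap. Your per-vertex computation contains a sign error that reverses the conclusion: you write that for $k_0\ge\beta_n$ one has $\log n-\log k_0\le -\tfrac{7}{24}\log n\to-\infty$, but since $k_0\le n$ always, $\log n-\log k_0=\log(n/k_0)\ge 0$. In fact your displayed per-vertex quantity is
\[
1+\log\tfrac{n}{k_0}+\log(np)-\tfrac12\bigl(1-\tfrac{k_0}{n}\bigr)\ \ge\ \tfrac12+o(1),
\]
so after multiplying back by $k_0$ the bound you obtain is $\ge e^{k_0/2}$, which is vacuous. The underlying reason is that the spanning-tree union bound $\PP_{k_0,p}(\text{connected})\le k_0^{k_0-2}p^{k_0-1}$ is far too crude here: it omits the factor $(1-p)^{\binom{k_0}{2}}\approx e^{-k_0^2/(2n)}$, and for $k_0\ge\beta_n$ one has $k_0^2/n\to\infty$, so the missing factor dominates everything else. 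A direct computation shows that the union-bound quantity $\binom{n}{k_0}k_0^{k_0-2}p^{k_0-1}(1-p)^{k_0(n-k_0)}$ is of order $e^{k_0^2/(2n)}\to\infty$, not $\to 0$. Your suggestion to ``bound the remaining graph by $1$'' is thus not enough: that step is correct, but the bound on the extracted component is useless.

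The paper's proof proceeds quite differently and cannot be shortcut in this way. It does \emph{not} isolate one component and bound the rest by $1$. Instead it keeps the full product formula, writes $\PP_{n,p}(L^n=l)=e^{o(b_n^2)}G_{Ma}(l)G_{Me}(l)$ with $G_{Ma}$ collecting the factors $(n/e)^{kl_k}z_k^n(1)^{l_k}$ for $k\ge\beta_n$ and $G_{Me}$ everything below. For $G_{Ma}$ the Stepanov-type bound $\log\Pp_{k,p}\lesssim k\log(k/n)$ (essentially the same strength as your spanning-tree bound) combines with the \emph{halved} disconnection exponent $(n-k)k/2$ in $z_k^n$ to give $\log G_{Ma}(l)\le -\tfrac{1-\rho}{2}\sum_{k\ge\beta_n}kl_k\le -\tfrac{1-\rho}{2}\beta_n$, after first invoking \cite{LA21} to discard components of size $\ge\rho n$. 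The crucial extra work is controlling $G_{Me}(l)$, which is \emph{not} a probability and is not bounded by $1$: the paper optimises $\sum_{k<\beta_n}l_k\log(n\lambda_k/l_k)+\cdots$ over all admissible $(l_k)_{k<\beta_n}$ via Lagrange multipliers, obtaining $\log G_{Me}(l)\le \theta(nb_n)^{2/3}+o(b_n^2)=o(\beta_n)$. Only after combining these two pieces does one get $\log\PP_{n,p}(L^n=l)\le -\tfrac18\beta_n$. The optimisation of the non-macroscopic part is the missing idea in your proposal.
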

From the above lemma we see that any configuration which has at least one connected component of macroscopic size occurs with a probability which is negligible in our regime.
In the following lemma we see that the contribution to $\PP_{n,p_n(\theta)}(L^n=l^n)$ given by the \textit{mesoscopic} terms runs on the scale $e^{O(b_n^2)}$.

\begin{lemma}[Contribution of mesoscopic components]\label{lem:meso}
    Let $\mu\in\B$, where  $\mathcal B=\{\mu \in \M_\N(0,\infty), \int xd\mu(x)<\infty\}$. Let $0<\epsilon< \theta$, and $(l^n)_n$ be a sequence such that $l^n\in\Ncal_n$ with $l_k^n=0$ for all $k\geq \beta_n$.
    The following hold:
    \begin{itemize}
        \item [i)] whenever $\int_{\epsilon}^\infty x d \Me_n(l^n)(x)\geq 3\theta$, then
        \begin{align*}
    \frac 1 {b_n^2}\log(F_{Me}(l))\leq -h\left(\int_{\epsilon}^\infty x d \Me_n(l^n)(x)\right)^3 
    \end{align*}
    for some $h$ (positive and finite);
    \item [ii)] if $\int_\epsilon^\infty xd \Me_n(l^n)(x)$ is uniformly bounded, $\Me_n(l^n)$ converges vaguely to $\nu$ as $n\to\infty$ and for some $C>0$  $\int_{\epsilon}^C x d \Me_n(l^n)(x)\to \int_\epsilon^Cxd\mu(x)$, then 
    \begin{equation*}
\lim_{n\rightarrow \infty}\frac{1}{b_n^2}\log(F_{Me}(l))=-\frac 1 6 \left(\int_\epsilon^Cxd\mu(x)-\theta \right)^3-\frac{\theta^3}{6}+\frac{1}{24}\int_\epsilon^Cx^3d\mu(x).
\end{equation*}
where $F_{Me}$ is defined in \eqref{eq:F_me}.
    
    \end{itemize} 
    \end{lemma}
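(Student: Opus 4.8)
The plan is to analyze the explicit expression \eqref{eq:F_me} for $F_{Me}(l)$ directly, by taking logarithms and tracking each factor on the scale $b_n^2$. Write $M_n := \sum_{k=\alpha_n}^{\beta_n} k l_k^n = (nb_n)^{2/3}\int_\epsilon^\infty x\, d\Me_n(l^n)(x)$ for the total mass carried by mesoscopic components (note $l_k^n=0$ for $k\geq\beta_n$, so the sums over $k\geq\beta_n$ that appear in \eqref{eq:F_me} vanish and $n-\sum_{k\geq\beta_n}kl_k^n = n$). The logarithm of $F_{Me}(l)$ then splits into three pieces: (a) the term $\bigl(\tfrac{n-M_n}{n}\bigr)^{n-M_n}$, whose log is $(n-M_n)\log(1-M_n/n) = -M_n - \tfrac{M_n^2}{2n} + O(M_n^3/n^2)$; (b) the product $\prod_k \bigl(\tfrac{n}{e}(1-p)^{\frac12\sum_i i l_i}\bigr)^{kl_k}$, whose log is $M_n\log n - M_n + \tfrac12 M_n\bigl(\sum_{i\leq\alpha_n} i l_i\bigr)\log(1-p)$; and (c) the product $\prod_k (z_k^n(1))^{l_k}$, for which we use the asymptotics of $\PP_{k,p}(L^k_k=1)$ from Section \ref{Sec:prob_conn}. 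The first step is therefore to substitute the connectivity estimate into $z_k^n(1) = \PP_{k,p}(L^k_k=1)(1-p)^{\frac12(n-k)k}/k!$, use Stirling for $k!$, and see that the leading $M_n\log n$ and $-M_n$ contributions cancel against those in (a) and (b), leaving a remainder of order $b_n^2$ that we must compute exactly.

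The second step is to carry out this cancellation carefully and identify the surviving $b_n^2$-order terms. With $p = p_n(\theta) = (1+\theta(b_n^2/n)^{1/3}+O((b_n^2/n)^{2/3}))/n$, one expands $\log(1-p) = -p - p^2/2 - \cdots$, so that the exponent $\tfrac12(n-k)k\log(1-p)$ in $z_k^n(1)$ produces, after combining with the $n^{kl_k}$ and $e^{-kl_k}$ factors, a contribution of the form $-\tfrac{\theta}{2}\cdot\tfrac{M_n}{(n^2 b_n)^{?}}\cdots$; the point is that writing $k = x(nb_n)^{2/3}$ the quadratic-in-$k$ terms $k^2/n$ sum to something of order $(nb_n)^{4/3}/n = n^{1/3}b_n^{4/3}$, which is exactly $b_n^2$ times $(n^{1/3}b_n^{4/3})/b_n^2 = (n/b_n)^{1/3}$ — so one has to be careful that this is balanced by the linear-in-$\theta$ term $\theta(b_n^2/n)^{1/3}\cdot k$ summed over mesoscopic $k$, which is of the same order. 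After this bookkeeping, the expansion should yield precisely
\[
\frac{1}{b_n^2}\log F_{Me}(l) = -\frac{1}{6}\Bigl(\int_\epsilon^\infty x\,d\Me_n(l^n)-\theta\Bigr)^3 - \frac{\theta^3}{6} + \frac{1}{24}\int_\epsilon^\infty x^3\,d\Me_n(l^n) + o(1),
\]
together with a uniform-in-$l$ upper bound of the advertised cubic form. The cubic structure comes from the competition between the "depletion'' cost $(n-M_n)\log(1-M_n/n)\approx -M_n - M_n^2/(2n)$ and the gain from the connectivity probabilities of mesoscopic trees/components, which carry a factor roughly $e^{-k^3/(24(nb_n)^2)}$-type correction at criticality; summing $k^3/(nb_n)^2$ over mesoscopic $k$ gives the $\int x^3\,d\mu$ term on the scale $b_n^2$.

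For part (i) I would not compute the limit but only the upper bound: throw away the (negative) term (a) entirely — or rather keep it — use the trivial bound $z_k^n(1)\leq 1/k!$ times the $(1-p)$ factor, and optimize the resulting expression over the value of $M_n/(nb_n)^{2/3} = \int_\epsilon^\infty x\,d\Me_n(l^n)\geq 3\theta$; when this integral exceeds $3\theta$ the positive $\theta$-dependent gains are dominated by the negative $-M_n^2/(2n)$ depletion term, which on scale $b_n^2$ is $-\tfrac12(\int x\,d\Me_n)^2\cdot(n^{1/3}b_n^{4/3})/b_n^2$... — here one sees that actually the relevant negative term is the cubic one coming from combining depletion with the criticality correction, giving $-h(\int_\epsilon^\infty x\,d\Me_n)^3$ for an explicit $h>0$, provided the integral is large enough that this term dominates all others (hence the threshold $3\theta$). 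For part (ii), the hypotheses (vague convergence plus convergence of the truncated first moment on $[\epsilon,C]$) are exactly what is needed to pass to the limit in the Riemann-type sums $\sum_k f(k/(nb_n)^{2/3}) l_k^n \to \int f\,d\mu$ for $f(x)=x$ and $f(x)=x^3$ on $[\epsilon,C]$, using that $C^0_c$-convergence controls the $x^3$ integral on compacts and the tail beyond $C$ contributes negligibly on the scale $b_n^2$ by the argument of part (i). The main obstacle I expect is Step 2: the delicate cancellation of the $\log n$ and linear terms, and keeping uniform error control of size $o(b_n^2)$ across all $k$ in the wide range $[\alpha_n,\beta_n]$ — this requires the sharp form of the connectivity asymptotics from Section \ref{Sec:prob_conn} to hold uniformly over that whole mesoscopic range, and the choice of $\beta_n = \lceil n^{17/24}b_n^{8/12}\rceil$ is presumably dictated precisely by where those uniform estimates break down.
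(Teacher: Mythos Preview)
Your proposal is correct and follows essentially the same approach as the paper: take logs, expand each factor via Stirling and the connectivity asymptotics of Proposition~\ref{lemgraphconnex}, track the cancellations to isolate the $b_n^2$-order terms, and read off the cubic structure (the paper does exactly this and arrives at the expansion \eqref{eqmainsumlem}, from which both parts follow). One small slip to fix when you carry it out: the first factor of $F_{Me}$ is $\bigl(\tfrac{n}{n-M_n}\bigr)^{n-M_n}$, so its log is $+M_n-\tfrac{M_n^2}{2n}-\tfrac{M_n^3}{6n^2}-\cdots$, not $-M_n-\cdots$; with that sign corrected the cancellations and the cubic bound for part~(i) go through exactly as you sketch.
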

In order to prove the lemma above, one needs to have a good control on $z^n_k(\cdot)$, and thus on $\PP_{k,p}(L^k_k=1)$, for $k\propto (nb_n)^{2/3}$.  Precise asymptotics for the probability of an ER $\mathcal G(k,p)$ graph to be connected have been known for specific parameter regimes for quite some time; see \cite{LA21} for a summary. However, these are not enough in our near-critical regime and we prove the following proposition.

\begin{proposition}[Probability of connectedness]\label{lemgraphconnex}
Let $K=K_n\to\infty$ be a diverging sequence, let $p_n(\theta)$ as in \eqref{eq:def_p} and 
\begin{align*}%
\Pp_{K,n}:=\PP_{K,p_n(\theta)}(L^K_K=1)=\PP_{K,p_n(\theta)}(\Gcal(K,p_n(\theta))\text{ is connected}).   
\end{align*}%
%
%
Then
\begin{align*}
\log(\Pp_{K,n})=\log\left(p_{n}(\theta)^{K_n-1}K_n^{K_n-2}(1-p_n(\theta))^{\frac 1 2 (K_n-1)(K_n-2)}\right)+ C_n
\end{align*}
where $(C_n)_n$ is a sequence that depends on the growth rate of $K_n$. More specifically:
\begin{itemize}
    \item[(i)] If $K\in \N$ is such that $n^{2/5} \preceq K$ , and  $K=o(n^{3/4}b_n^{1/2})$, then

    \begin{align}\label{eq:prob_conn_regime_meso}
    C_n= \frac 1 6 K\left(\frac{K}{2n}\right)^2+O\left(K\left(\frac{K}{2n}\right)^2\left(\frac{b_n^2}{n}\right)^{1/3}+K\left(\frac{K}{2n}\right)^3+  \log(K)\right)
    \end{align}%
    In particular, if $K\sim u(nb_n)^{2/3}$ for some $u\in \mathbb R_+^*$, then
    \begin{align}
    \log(P_{K,n})=&K\log\left(\frac{n}{K}\right)+\theta K\left(\frac{b_n}{n}\right)^{1/3}-\frac{K^2}{2n}(1+\theta\left(\frac{b_n^2}{n}\right)^{1/3})\nonumber\\
    &+\frac{K^3}{24n^2}-\frac{\theta^2}2 K\left(\frac{b_n}{n}\right)^{2/3}+O(\log(K)).\label{eqpropconnexdeuxtrois}
    \end{align}
    \item[(ii)]
    If $K\preceq n^{2/5}$, then there is $C>0$ such that $C_n\leq CK\left(\frac{K}{n}\right)^{1/2}$. 
\end{itemize}
%
\end{proposition}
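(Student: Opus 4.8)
The plan is to rewrite $\Pp_{K,n}$ as a weighted sum over the \emph{excess} of connected graphs (the number of edges beyond a spanning tree) and then to perform a Laplace‑type analysis of the resulting series. For the reduction, set $q_n:=p_n(\theta)/(1-p_n(\theta))$ and let $c_{K,m}$ be the number of connected labelled graphs on $K$ vertices with $m$ edges, so that $c_{K,K-1}=K^{K-2}$ by Cayley's formula. Grouping connected graphs according to their number of edges $m=K-1+j$,
\[
\Pp_{K,n}=\sum_{j\ge0}c_{K,K-1+j}\,p_n(\theta)^{K-1+j}\bigl(1-p_n(\theta)\bigr)^{\binom K2-(K-1)-j}
=p_n(\theta)^{K-1}\bigl(1-p_n(\theta)\bigr)^{\frac12(K-1)(K-2)}\sum_{j\ge0}c_{K,K-1+j}\,q_n^{\,j},
\]
so that, comparing with the prefactor in the statement,
\[
C_n=\log\Bigl(\sum_{j\ge0}\tfrac{c_{K,K-1+j}}{K^{K-2}}\,q_n^{\,j}\Bigr),
\]
a series whose $j=0$ term equals $1$; in particular $C_n\ge0$ and the whole point is to compare the positive tail $\sum_{j\ge1}$ with $1$. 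This step is pure bookkeeping.

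Two combinatorial inputs are needed. The crude one: every connected graph of excess $j$ is a spanning tree plus $j$ extra edges chosen among the $\binom K2-(K-1)=\frac12(K-1)(K-2)$ non‑tree pairs, whence $c_{K,K-1+j}\le K^{K-2}\binom{\frac12(K-1)(K-2)}{j}$ and therefore $C_n\le\log\bigl(e^{\frac12(K-1)(K-2)q_n}\bigr)=\tfrac12(K-1)(K-2)q_n=\tfrac{K^2}{2n}(1+o(1))$, which holds for every $K$. This already settles part (ii): since $\tfrac{K^2}{2n}=\tfrac12(K/n)^{1/2}\cdot K(K/n)^{1/2}$ and $K\preceq n^{2/5}$ forces $(K/n)^{1/2}\to0$, we get $C_n=o\bigl(K(K/n)^{1/2}\bigr)\le C\,K(K/n)^{1/2}$. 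The sharp input, needed for part (i), is an asymptotic estimate for $c_{K,K-1+j}$ uniform over $j\to\infty$ with $j=o(K)$ — this will turn out to be the relevant regime, since the optimal excess will be $j^\star\asymp K^3/n^2$. Building on Wright's enumeration of connected graphs of fixed excess and the asymptotics of the Wright constants (and their extensions to diverging excess by Bollob\'as, \L uczak, Flajolet--Knuth--Pittel and Pittel--Wormald; alternatively these can be re‑derived directly from the tree function $T(x)=\sum_{k\ge1}\tfrac{k^{k-1}}{k!}x^k$, $T=xe^T$, together with a saddle‑point estimate for $[x^K](1-T(x))^{-3j}$), one obtains, uniformly over the relevant window,
\[
\log c_{K,K-1+j}=(K-2)\log K+\tfrac{3j}{2}\log K-\tfrac j2\log\!\Bigl(\tfrac{12j}{e}\Bigr)+o(j).
\]

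For the Laplace analysis, substitute $\log q_n=-\log n+\theta(b_n^2/n)^{1/3}+O((b_n^2/n)^{2/3})$: the generic summand of the series for $C_n$ is then $\exp(g(j)+o(j))$ with $g(j):=\tfrac{3j}{2}\log K-\tfrac j2\log(12j/e)+j\log q_n$, strictly concave in $j$ with $g''(j)=-1/(2j)$. Its maximiser solves $\log(12j^\star)=3\log K+2\log q_n$, i.e.\ $j^\star=K^3q_n^2/12\sim K^3/(12n^2)$, and one computes $g(j^\star)=\tfrac12 j^\star\sim K^3/(24n^2)=\tfrac16 K\bigl(\tfrac{K}{2n}\bigr)^2$. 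Under Hypothesis~\ref{hypbn}, for those $K$ with $j^\star\to\infty$ and $j^\star=o(K)$ (true whenever $K\gg(n^2\log n)^{1/3}$, in particular when $K\asymp(nb_n)^{2/3}$, for which $j^\star\asymp b_n^2$), the sharp estimate applies; a second‑order expansion of $g$ around $j^\star$ (peak width $\asymp\sqrt{j^\star}$, hence a Laplace correction $O(\log K)$), the crude bound to discard the $j$ far from $j^\star$, and the $o(j)$ error give
\[
C_n=g(j^\star)+O(\log K)=\tfrac16 K\Bigl(\tfrac{K}{2n}\Bigr)^2+O\!\Bigl(K\Bigl(\tfrac{K}{2n}\Bigr)^2\Bigl(\tfrac{b_n^2}{n}\Bigr)^{1/3}+K\Bigl(\tfrac{K}{2n}\Bigr)^3+\log K\Bigr),
\]
the first error term coming from the correction $\theta(b_n^2/n)^{1/3}$ in $\log q_n$ (which shifts $j^\star$, hence $g(j^\star)$, by a relative $O((b_n^2/n)^{1/3})$) and the second from the sub‑leading Taylor remainders in $\log(1-p_n(\theta))$ and in the enumeration constant (relative $O(K/n)$); this is \eqref{eq:prob_conn_regime_meso}. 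When $K\ll n^{2/3}$ (so $j^\star=o(1)$ and $g$ has no interior maximiser) the tail $\sum_{j\ge1}$ is dominated by small $j$ and the crude bound already suffices. Finally, \eqref{eqpropconnexdeuxtrois} follows by plugging $K\sim u(nb_n)^{2/3}$ into \eqref{eq:prob_conn_regime_meso} and Taylor‑expanding the elementary factors $\log p_n(\theta)$, $\log(1-p_n(\theta))$ in the prefactor of the reduction.

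\textbf{Main obstacle.} The delicate point is the uniform sharp enumeration: one needs $c_{K,K-1+j}$ with a multiplicative error that is uniformly $e^{o(j)}$ over a window of excesses $j$ growing like $K^3/n^2\asymp b_n^2$, which under Hypothesis~\ref{hypbn} lies far beyond the range $j=o(K^{1/3})$ in which Wright's asymptotics are usually quoted, while still being $o(K)$. Establishing this estimate uniformly, and then propagating all the sub‑exponential prefactors and Taylor remainders (in $p_n(\theta)$, in $q_n$, and in the enumeration constant) through the Laplace step so as to produce precisely the error terms of \eqref{eq:prob_conn_regime_meso}, is the technical heart of the proof; the reduction and the regime (ii) bound are, by contrast, routine.
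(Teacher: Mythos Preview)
Your strategy coincides with the paper's: both reduce $\Pp_{K,n}$ to the tree prefactor times a series over the excess, and both identify the dominant excess via a Laplace‑type argument, arriving at the same optimiser $j^\star\sim K^3/(12n^2)$ and main term $g(j^\star)=K^3/(24n^2)=\tfrac16 K(K/2n)^2$. The difference lies in the enumeration input. The paper invokes the Bender--Canfield--McKay asymptotic for $\tilde C(K,k)$, which comes with explicit error terms $O(1/k)+O(k^{1/16}/K^{9/50})$ uniform over the full range $k\ge1$; the price is that the formula is written through an implicit variable $y$ satisfying $xy=\tfrac12\log\tfrac{1+y}{1-y}$, so the paper must first show $x_n\to1$, then Taylor‑expand the BCM expression near $x=1$ (your formula is essentially what emerges from that expansion). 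Your route via the Wright‑constant asymptotic $\log c_{K,K-1+j}=(K-2)\log K+\tfrac{3j}{2}\log K-\tfrac{j}{2}\log(12j/e)+o(j)$ is more transparent and makes the calculus of $g(j)$ immediate, but, as you say yourself, it puts the entire burden on justifying uniformity of that $o(j)$ over $j\asymp K^3/n^2$---which is exactly what BCM supplies off the shelf.

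There is one genuine gap. In the intermediate range $n^{2/5}\preceq K\ll n^{2/3}$, the statement \eqref{eq:prob_conn_regime_meso} collapses to $C_n=O(\log K)$ (the main term $K^3/(24n^2)$ is $o(1)$ there). Your ``crude bound'' gives only $C_n\le K^2/(2n)$, and for, say, $K=n^{3/5}$ this is $n^{1/5}\gg\log K$; so the crude bound does \emph{not} suffice as you claim. The fix is easy---use Wright's fixed‑excess asymptotic $c_{K,K-1+j}\sim\rho_j K^{K-2+3j/2}$ to see that each summand is $O((K^{3/2}/n)^j)$ with $K^{3/2}/n\to0$, whence $C_n=O(K^{3/2}/n)=o(\log K)$---but it is not the argument you wrote. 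The paper avoids this issue because the BCM formula is already uniform down to $k=1$, so a single concentration argument covers the whole range $n^{2/5}\preceq K=o(n^{3/4}b_n^{1/2})$.
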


Finally, we need to know precise asymptotics for the probability that a graph at criticality does not have any components of mesoscopic size or larger. In order to do so, we fix $\epsilon>0$ and recall that  $\alpha_n=\epsilon (nb_n)^{2/3}$ for any $n\in \N$, as defined below \eqref{eq:z_k}.
Let \[E_{\alpha_n}:=\{l\in \mathcal{N}_n, l_k=0, \forall k,k\geq \alpha_n\}.\]
The next proposition gives an estimate for the probability $\PP(L^n\in E_{\alpha_n})$ at an exponential scale.

\begin{proposition}\label{proppetitecomp}
Fix $\epsilon>0$. 
For any sequence $N:=N(n) \sim n$, for any $\epsilon>0$, given  $\alpha_n=\epsilon (nb_n)^{2/3}$ and a random graph with law $\mathcal{G}(N,\frac{\omega_n}N)$, where $\omega_n:=1+\theta\left(\frac{b_n^2}{n}\right)^{1/3}+O\left(\left(\frac{b_n^2}{n}\right)^{2/3}\right)$, 
we have the following

\begin{align}
\liminf_{n\rightarrow\infty}\frac 1{b_n^2}\log &  \PP_{N,\frac{\omega_n}N}(L^N\in E_{\alpha_n})\geq 
     -\frac 16 \theta^31_{(\theta\geq 0)},\label{eqproppetitecomplow}\\
    \limsup_{n\rightarrow\infty}\frac 1{b_n^2}\log & \PP_{N,\frac{\omega_n}N}(L^N\in E_{\alpha_n})\leq 
     -\frac 16 \theta^31_{(\theta\geq 0)}+\delta_{\epsilon},\label{eqproppetitecompup}
     \end{align}
     where $\delta_{\epsilon}\searrow 0$ when $\epsilon\to0$.
\end{proposition}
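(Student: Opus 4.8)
The plan is to exploit the product decomposition $\PP_{N,\omega_n/N}(L^N\in E_{\alpha_n}) = \sum_{l\in E_{\alpha_n}} \PP_{N,\omega_n/N}(L^N=l)$, and recognize that on the event $E_{\alpha_n}$ only microscopic components are present, so only the factor $F_{Mi}$ in \eqref{eqprimord2}–\eqref{eq:F_mi} matters. By Lemma~\ref{lem:cardinality} the number of sequences $l$ contributing to a fixed vague neighborhood is $e^{o(b_n^2)}$, so it suffices to identify the dominant sequence $l^*\in E_{\alpha_n}$, i.e. the one maximizing $\PP_{N,\omega_n/N}(L^N=l)$. Heuristically, as in \cite{LA21,AnKoLaPa23}, the microscopic component sizes of the near-critical graph follow a characteristic ``Borel-type'' distribution, and the cost of having \emph{no} mesoscopic component is exactly the cost of forcing the empirical measure of microscopic clusters to stay in its typical shape while absorbing all $N\sim n$ vertices into pieces of size $o((nb_n)^{2/3})$.

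The key steps, in order, are: (1) Rewrite $\log\PP_{N,\omega_n/N}(L^N=l)$ for $l\in E_{\alpha_n}$ using \eqref{eqprimord} and the connectedness asymptotics of Proposition~\ref{lemgraphconnex}(ii), which apply since every cluster has size $k\le\alpha_n\preceq n^{2/5}$ is \emph{not} quite true — here one must be careful, $\alpha_n=\epsilon(nb_n)^{2/3}$ can exceed $n^{2/5}$, so one actually uses regime (i) of Proposition~\ref{lemgraphconnex} for the larger microscopic clusters and (ii) for the smaller ones. Inserting these expansions, $\log\PP_{N,\omega_n/N}(L^N=l)$ becomes, up to $o(b_n^2)$, a functional of the form $\sum_k l_k \psi_n(k)$ subject to $\sum_k k l_k = N$, where $\psi_n(k)$ collects the Stirling terms, the powers of $\omega_n/N$ and $(1-\omega_n/N)$, and the $C_n$ corrections. (2) Introduce the ``special series'' alluded to below Hypothesis~\ref{hypbn} — essentially the generating function $\sum_{k\ge 1} \frac{k^{k-1}}{k!}(xe^{-x})^k$, which equals the small-cluster mass in the subcritical/critical phase — and perform the constrained optimization over $(l_k)_{k<\alpha_n}$ via a Lagrange multiplier (a tilting parameter $\lambda$) conjugate to the constraint $\sum k l_k = N$. (3) Show that the optimal multiplier, when $\omega_n = 1+\theta(b_n^2/n)^{1/3}+\dots$, produces a free-energy value whose leading term at speed $b_n^2$ is $-\frac16\theta^3 \mathbf 1_{\{\theta\ge 0\}}$; the indicator arises because for $\theta<0$ (subcritical side) the graph is \emph{typically} in $E_{\alpha_n}$, so the rate is $0$, while for $\theta>0$ one pays the cost of suppressing the nascent giant. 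The $\epsilon$-dependent error $\delta_\epsilon$ in \eqref{eqproppetitecompup} comes from the clusters of size between a large constant times $(nb_n)^{2/3}$-independent scale and $\alpha_n$: truncating the series at $\alpha_n$ rather than $\infty$ costs an amount that vanishes as $\epsilon\to 0$. For the lower bound \eqref{eqproppetitecomplow} one simply exhibits a near-optimal $l$ (take $l_k$ proportional to the tilted Borel weights, rounded) and checks its probability matches the claimed exponential rate; since this is a single term, no cardinality estimate is even needed.

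I expect the main obstacle to be step (2)–(3): carefully expanding the special series $\sum_k \frac{k^{k-1}}{k!} t^k$ and its derivatives near the radius of convergence $t = e^{-1}$, and tracking how the deviation $\omega_n - 1 \asymp (b_n^2/n)^{1/3}$ feeds through the Lagrangian to yield precisely the cubic $\frac16\theta^3$ with the correct constant. This is where the $n^{3/10}\ll b_n$ hypothesis is presumably used — to guarantee that the truncation level $\alpha_n$ is large enough that the tail of the series beyond $\alpha_n$ is negligible at speed $b_n^2$, and simultaneously that the corrections $C_n$ from Proposition~\ref{lemgraphconnex} are $o(b_n^2)$ uniformly. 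A secondary technical point is matching the two regimes of Proposition~\ref{lemgraphconnex} at the crossover $k\asymp n^{2/5}$ without incurring an error larger than $o(b_n^2)$; this should follow from the explicit bounds $C_n\le CK(K/n)^{1/2}$ in regime (ii) and $C_n = O(K(K/2n)^2+\log K)$ in regime (i), both of which are $o(b_n^2)$ when summed against a measure with finite first moment, but the bookkeeping is delicate.
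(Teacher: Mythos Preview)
Your proposal is essentially the paper's approach: the lower bound exhibits a recovery sequence built from tilted Borel weights $l_k \propto N\lambda_k e^{-k\xi}$ (with $\lambda_k = k^{k-2}e^{-k}/k!$ and the identities \eqref{eqseries}--\eqref{eqseries2}) and checks its probability directly, while the upper bound optimizes the functional $J_n(l)=\sum_k l_k\log(A_k/l_k)$ via Lagrange multipliers, with the case split on the sign of $\theta$ exactly as you describe. Two refinements worth noting: for the lower bound the paper uses only the spanning-tree bound $\Pp_{k,p}\ge k^{k-2}p^{k-1}(1-p)^{(k-1)(k-2)/2}$ rather than Proposition~\ref{lemgraphconnex}, and for the upper bound it stratifies first by the mass $M$ sitting in the range $[n^{2/5},\alpha_n]$ (level sets $H_M^{(n)}$) and only then applies the Lagrange multiplier below $n^{2/5}$, precisely in order to control the $C_n$ corrections from regime~(i) that you correctly flag as the delicate secondary obstacle.
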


\section{The proof of Theorem~\ref{thmweakldp}}\label{sec:proof_main_theorem}
In this section we prove the main theorem, using the results from Section~\ref{sec:main_lem}. We split the proof of the large deviation principle, as usually done, into a lower and an upper bound. 

\subsection{Lower bound}
Fix $\mu\in \{\M_{\N}(0,\infty), \int x d\mu<\infty\}$. One way to prove the desired lower bound is  to find a sequence $l^{n,\epsilon}$, where each $l^{n,\epsilon}\in\Ncal_n$ and such that $\Me_n(l^{n,\epsilon})\in B_{\delta}(\mu)$, for all $n$.
First, we choose to study the truncated 
measure \[\mu^n_{\delta,\epsilon}:=\sum_{i,C_\delta \geq u_i\geq \epsilon}\delta_{\lfloor u_i (nb_n)^{2/3}\rfloor\big /(nb_n)^{2/3}} \in \M_\N(0,\infty)\]
with $\epsilon>0$ and $C_\delta>0$ being two constant, respectively small and large enough so 
that $\mu_{\delta,\epsilon}^n \in B_\delta(\mu)$ for any $n$ large. Notice that the sequence admits a limit $\mu_{\delta,\epsilon}\colon =\lim_{n\rightarrow\infty}\mu_{\delta,\epsilon}^n$, which satisfies $\mu_{\delta,\epsilon} \in \M_\N(0,\infty)$ and $\lim_{n\rightarrow\infty}\int xd \mu_{\delta,\epsilon}^n(x)=\int xd \mu_{\delta,\epsilon}(x)$, since the measures have all compact support. Moreover,  by construction, there are several sequences $l^{(n,\epsilon,\delta)}\in \mathcal N_n$ such that $\Me_n(l^{(n,\epsilon,\delta)})|_{[\epsilon,C_{\delta}]}=\mu_{\delta,\epsilon}^n$, where with $\alpha|_{[a,b]}$ we mean the restriction of the measure $\alpha\in\Mspace$ to the support $[a,b]$. In particular, for all these sequences the entries $l^{(n,\epsilon,\delta)}_k$ for $\lfloor \epsilon (nb_n)^{2/3}\rfloor \leq k\leq \lfloor C_{\delta}(nb_n)^{2/3}\rfloor $ are fixed, while the other entries are free (subject to the only assumption that $\sum_{k=1}^{n}kl^{(n,\epsilon, \delta)}_k=n$). 
We restrict our study to the sequences where $l^{(n,\epsilon, \delta)}_k=0$ for all $k>C_{\delta}(nb_n)^{2/3}$. All the entries $l^{(n,\epsilon, \delta)}_k$ for $k<\lfloor \epsilon (nb_n)^{2/3}\rfloor$ are free, as long as 
\[\sum_{k=1}^{\lfloor \epsilon (nb_n)^{2/3}\rfloor -1}kl^{(n,\epsilon, \delta)}_k=n-\int xd \mu_{\delta,\epsilon}^n(x)(nb_n)^{2/3}.
\]

Let us call $N:=n-\int xd \mu_{\delta,\epsilon}^n(x)(nb_n)^{2/3}$, we see that, as a consequence of all of the above, we have that
\[
\PP_{n,p_n(\theta)}\left(\Me_n(L^n)\in B_{\delta}(\mu)\right)\geq e^{o(b_n^2)}F_{Me}(l^{(n,\epsilon,\delta)})\sum_{\substack{l\in\Ncal_N\\ l_k=0,\, \forall k\geq \lfloor \epsilon (nb_n)^{2/3}\rfloor}}F_{Mi}(l).
\]
The rest of the proof relies on Lemma~\ref{lem:meso} and the following claim:
\begin{itemize}
    \item {\bf Claim }:
    \[
    \sum_{\substack{l\in\Ncal_N\\ l_k=0,\, \forall k\geq \lfloor \epsilon (nb_n)^{2/3}\rfloor}}F_{Mi}(l)=\PP_{N,\frac{\omega_n}N}(L^N\in E_{\alpha_n}),
    \]
    where $\omega_n=1+(\theta-\int xd \mu_{\delta,\epsilon}(x))\left(\frac{b^2_n}{n}\right)^{1/3}+o\left(\frac{b^2_n}{n}\right)^{1/3}$.
\end{itemize}

Given the \textbf{claim} above, Lemma~\ref{lem:meso}  and Proposition~\ref{proppetitecomp}, we see that
\begin{align*}
\liminf_{n\to\infty} \frac{1}{b_n^2}&\log(\PP(Me_n(L^n)\in B_\delta(\mu)))\geq \lim_{n\rightarrow\infty} \frac{1}{b_n^2}\log(\PP(L^n=l^\delta))\\
&\geq -\limsup_{n\to\infty} I(\mu_{\delta,\epsilon}^n)\\
&\geq -I(\mu_{\delta,\epsilon})
\end{align*}

The last line comes from the fact that we actually have $I(\mu_{\delta,\epsilon}^n)\rightarrow I(\mu_{\delta,\epsilon})$ in that specific case. To conclude notice that 
in any case, $\lim_{\epsilon \rightarrow 0}\lim_{\delta\rightarrow 0}I(\mu_{\delta,\epsilon})=I(\mu)$;  indeed 
 we have both $\int xd\mu_{\delta,\epsilon}(x)\underset{\epsilon \rightarrow 0,\delta\rightarrow 0}\rightarrow \int xd\mu(x)$ and $\int x^3d\mu_{\delta,\epsilon}(x)\underset{\epsilon \rightarrow 0,\delta\rightarrow 0}\rightarrow \int x^3d\mu(x)$ and since $I(\cdot)$ is continuous with respect to these two parameters, $\lim_{\delta\rightarrow 0}\lim_{\epsilon \rightarrow 0}I(\mu_{\delta,\epsilon})= I(\mu)$.

The proof of the \textbf{claim} can be deduced from the expression of $F_{Mi}(l)$ in \eqref{eq:F_mi} and \eqref{eqprimord} by noticing that for any $l\in E_{\alpha_n}$, $F_{Mi}(l)=\PP_{N,\frac{\omega_n}N}(L^N=l)$. 

\subsection{Upper bound}

Fix $\mu\in\Mspace$ and $\delta>0$. 
Since we know from Lemma~\ref{lem:cardinality} that the cardinality of set 
\[
\{l\colon \Me_n(l)\in B_{\delta}(\mu)\}\subseteq \Ncal_n.
\]
is $e^{o(b_n^2)}$, it is enough to find a sequence $l^n$, such that $\Me_n(l^n)\in B_{\delta}(\mu)$ that maximizes the product $F_{Mi}(l^n)F_{Me}(l^n)F_{Ma}(l^n)$ in \eqref{eqprimord2}.

First of all, from Lemma~\ref{lem:macro}, we see that no sequence $l^n$ where $l^n_k\geq 1$ for $k\geq \beta_n$ is suitable to maximize this product. This means that an optimal $l^n$ will be such that $F_{Ma}(l^n)=1$. Now, we would like to 
 apply directly Lemma~\ref{lem:meso}-\textit{ii)} and Proposition~\ref{proppetitecomp} to approximate properly $F_{Mi}(l^n)F_{Me}(l^n)$ and then optimize it via standard optimization arguments. However, in order to apply those results, 
 we actually need the sum $\sum_{k\geq \alpha_n}\frac{k}{(nb_n)^{2/3}}l_k$ to converge, which is not 
the case in general for any sequence $l$ of elements of $\mathcal N_n$, even if $\Me_n(l)$ is close to a $\mu$ with finite first moment. \\
Therefore we need to control the upper bound also for elements $l\in \mathcal{N}_n$  such that $\sum_{k\geq \alpha_n}\frac{k}{(nb_n)^{2/3}}l_k$ does not converge. Let us focus on those $l$ first. Remember that, for the argument above, it is sufficient to focus on those $l$ such that $l_k=0$ for all $k\geq \beta_n$.
Let $(M_n)_{n\in \N}\in \mathbb{R}_+^\N$ be an increasing sequence such that $\lim_{n\rightarrow \infty}M_n = \infty$ and let $l$ be a sequence of elements of $\mathcal{N}_n$ such that $\sum_{k=\alpha_n}^{n}kl_k\geq M_n(nb_n)^{2/3}$. In that case either there is at least one macroscopic component or we see from Lemma~\ref{lem:meso}-\textit{i)}  that 
  \[F_{Me}(l)\leq e^{-h M_n^3b_n^2}\] for some $h>0$. 
  Thus, applying lemma~\ref{lem:macro} and lemma~\ref{lem:meso}-\textit{i)} for any $n$ large enough, for all $l$ such that $\sum_{k=\alpha_n}^{n}kl_k\geq M_n(nb_n)^{2/3}$, we have
\begin{align}\label{eqgmesogrosterm}
   \limsup_{n\to \infty} \frac{1}{b_n^2}\log\left(\PP_{n,p_{n}(\theta)}(L^n=l)\right)
 &\leq - \lim_{n\to \infty}h M_n^3.
\end{align}
Sending $n$ to infinity, the upper bound in \eqref{eqgmesogrosterm} goes to $-\infty$ and we see that no such sequence is suitable to maximise the product.

In particular, for any measure $\mu$ such that $\int_0^\infty x d\mu(x)=\infty$, the vague topology implies that there is a family $(\tilde M_\delta)_{\delta >0}$ with $\lim_{\delta \rightarrow 0}\tilde M_\delta=\infty$ such that for all $\nu \in B_{\delta}(\mu)$, $\int xd\nu(x)\geq \tilde M_\delta$. Thus for such measure $\mu$ 
\begin{align*}
\limsup_{n\rightarrow\infty}\frac{1}{b_n^2}\log(\PP(Me_n(L^n)\in B_\delta(\mu))\leq -\frac{\tilde M_\delta^3}{12}.
\end{align*}
If we send $\delta \to 0$ we see that we get precisely $-\infty$, which corresponds to $-I(\mu)$ in this case.\\

Suppose now that $\mu$ satisfies $\int_0^\infty x d\mu(x)<\infty$ and let $l\in \Me_n^{-1}(B_\delta(\mu))$ be a maximizing sequence of $\PP_{n,p}(L_n\in \Me_n^{-1}(B_\delta(\mu)))$, we see from \eqref{eqgmesogrosterm} that in order to obtain a non trivial upper bound, the sum
 $\frac{1}{(nb_n)^{2/3}}\sum_{k=\alpha_n}^{n}kl_k$ must be bounded. 
 Let $M>0$ be an upper bound for it.
To such sequence one can represent $Me_n(l)$ as 
$Me_n(l):=\sum_{k\in \N}l_k\delta_{k/(nb_n)^{2/3}}=\sum_{i\in \Z}\delta_{u_i^l}$ and by a slight abuse of notation let $l$ be a subsequence such that $Me_n(l)$ converges to some limit $\nu^{\delta}\in \overline{B_\delta}(\mu)$. Let $\nu_+^l:=\sum_{i,u_i^l\geq \epsilon}\delta_{u_i^l}\in \M_\N(0,\infty)$ be the measure associated to the non microscopic components. Notice that $\nu_+^l$ depends on $l$ and thus on $n$, moreover as defined, it satisfies $\int_0^\infty x d\nu_+^l(x)\leq M$ and thus $\nu_+^l$ belong to a compact set $A_M$ of the form described in Proposition~\ref{prop:compact_sets}. We can then further extract from $l$ a sub-sequence so that $\nu_+^l$ converges to some measure $\nu_\epsilon$ and the total mass of it converge too :
\begin{align*}
\lim_{n\rightarrow \infty}\frac{1}{(nb_n)^{2/3}}\sum_{k=\alpha_n}^{\beta_n}kl_k&=\lim_{n\rightarrow \infty}\int_0^\infty xd\nu_+^l(x)\\
&=\lim_{n\rightarrow \infty}\int_\epsilon^Mxd\nu_+^l(x) =\int_\epsilon^Mxd\nu_\epsilon(x).   
\end{align*}
Now we are able to apply Proposition~\ref{proppetitecomp}  to the product of microscopic components in the term $F_{Mi}(\cdot)$.
Indeed, for any such sub-sequence $l$ the product in the term $F_{Mi}(\cdot)$ corresponds to the probability  $\PP_{N,p'_n}(L_N= l_-)$, where $ l_- \in E_{\alpha_n}$ consists in the first $\alpha_n$ coordinates of $l$ and the random variable $L_N$ corresponds to the sequence of components of the random Erd\H{o}s-R\'enyi graph $\mathcal G(N,p')$ with $p'=\frac{1+\theta'}{N}$, $N:=n-\sum_{k=\alpha_n}^{\beta_n}kl_k$ and $\theta'=\theta-\sum_{k=\alpha_n}^{\beta_n}\frac{k}{(nb_n)^{2/3}}l_k=\theta -\int_\epsilon^Mxd\nu(x)+o(1)$. Therefore, the graph satisfies the assumptions of Proposition~\ref{proppetitecomp} and we can deduce from that proposition the following upper bound with $\delta_\epsilon$ converging to $0$ when $\epsilon$ tends to $0$ :

\begin{align*}
    \log&\left(F_{Mi}(l)\right)\leq-\frac 16 \left(\theta-\sum_{k=\alpha_n}^n \frac{k}{(nb_n)^{2/3}}l_k\right)^31_{(\theta\geq \sum_{k=\alpha_n}^n \frac{k}{(n b_n)^{2/3}})}+\delta_\epsilon b_n^2.
\end{align*}
Notice that  $(\nu_\epsilon)_{\epsilon>0}$ weakly converges to $\nu^\delta \in \overline{B_{\delta}}(\mu)$. Using Lemma~\ref{lem:meso} to control $F_{Me}$, we thus get for our subsequence of $l$
\begin{align*}
\limsup_{n\rightarrow \infty}\frac{1}{b_n^2}\log\left(\PP_{n,p}(L^n=l)\right)&\leq\limsup_{n\rightarrow \infty}-\frac 1 6 \left(\frac{1}{(nb_n)^{2/3}}\sum_{k=\alpha_n}^{\beta_n}kl_k-\theta \right)^31_{(\theta\leq \sum_{k=\alpha_n}^{\beta_n} \frac{k}{(n b_n)^{2/3}})}\\
&-\frac{\theta^3}{6}+\frac 1 n\sum_{k=\alpha_n}^{\beta_n}kl_k\theta^3+\frac{1}{24}\sum_{k=\alpha_n}^{\beta_n}\left(\frac{k}{(nb_n)^{2/3}}\right)^3l_k+\delta_\epsilon\\
&\leq -\liminf_{n\to\infty} I(\nu_+^l)+\delta_\epsilon \\
&\leq - I(\nu_\epsilon)+\delta_\epsilon \leq -\inf_{\nu \in \overline{B_\delta}(\mu)}I(\nu)+\delta_\epsilon.
\end{align*}
Since $l$ is a subsequence of a maximizing sequence, thanks to Lemma~\ref{lem:cardinality}, we get
\begin{align*}
\limsup_{n\to\infty} \frac{1}{b_n^2}\log(\PP_{n,p}(Me_n(L^n)\in B_\delta(\mu))&=\limsup_{n\to\infty} \frac{1}{b_n^2}\log(\PP_{n,p}(L^n=l))\\
&\leq -\inf_{\nu \in \overline{B_\delta}(\mu)}I(\nu)+\delta_\epsilon,
\end{align*}
for any $\delta>0$. Then by the lower semi-continuity of $I(\cdot)$,
\begin{align*}
\lim_{\epsilon \rightarrow 0\delta \rightarrow 0}\limsup \frac{1}{b_n^2}\log(\PP_{n,p}(Me_n(L^n)\in B_\delta(\mu))&\leq -I(\mu),
\end{align*}
which ends the proof.

\section{Probability of connectedness: proof of Proposition~\ref{lemgraphconnex}}\label{Sec:prob_conn}

We recall that for any $K\in \N$ and $0<p<1$, \[
\Pp_{K,p}\geq p^{K-1}K^{K-2}(1-p)^{\frac 1 2 (K-1)(K-2)},\]
indeed the left hand-side corresponds to the probability that the graph is a tree spanning over the $K$ vertices. By abuse of notation, in the following, we will use $K$ to indicate the diverging sequence $K_n$ and $p$ to indicate the sequence $p_n(\theta)$.
We start with the case (ii) when $K \preceq n^{2/5}$.
We consider the following rough upper bound consisting into the sum of the probability of having a connected graph with $k$ excess of edges with $k\leq \frac{K(K-1)}{2}$; 
\begin{align*}
\log\left(\Pp_{K,p}\right)&\leq \log\left( p^{K-1}K^{K-2}(1-p)^{\frac 1 2 (K-1)(K-2)}\left(1+\sum_{j=1}^{\frac{K(K-1)}{2}}{\frac{K(K-1)}{2}\choose j}\left(\frac p {1-p}\right)^j\right)\right)\\
&\leq \log\left( p^{K-1}K^{K-2}(1-p)^{\frac 1 2 (K-1)(K-2)}\right) +O\left(\frac{K^2}{n}\right),
\end{align*}
where the last line is a consequence of the fact that the term in parenthesis is equal to $(1+\frac p{1-p})^{\frac{K(K-1)}2}$.
This concludes the proof of (ii).

Next, we prove (i) by obtaining a more precise approximation of $\Pp_{K,p}$ for $K\succeq n^{2/5}$. To do so, we use a better approximation of the number of connected graphs $\tilde C(K,k)$ with exactly $k+K$ edges ($k\geq 1$), which is given in \cite{bdmckay}. The probability of having a specific connected graph with $K+k$ edges under $\mathbb{P}_{K,p}$ is then $p^{K+k}(1-p)^{\frac{K(K-1)}{2}-k-K}$.

For all parameter regimes, $\Pp_{K,p}$ can be written explicitly as
\begin{align}%
\Pp_{K,p} &= p^{K-1}(1-p)^{\frac 1 2 (K-1)(K-2)}(K^{K-2}+\tilde C(K,0)\frac{p}{1-p})\nonumber\\
&\quad+\sum_{k=1}^{\frac{K(K-1)}{2}}\tilde C(K,k)p^{k+K}(1-p)^{\frac{K(K-1)}{2}-k-K}.\label{eqsumconex}
\end{align}%
To give a rough upper bound of $\tilde C(K,0)$, we estimate in how many ways an edge can be added to any tree of size $K$. Since there are $K^{K-2}$ trees of size $K$ and at most $K \choose 2$ ways to add an edge, we conclude that
\begin{align*}%
\tilde C(K,0)=K^{K-2}O({K\choose 2}).
\end{align*}%
For $k\geq 1$, we use the estimate from \cite{bdmckay}:
uniformly in $K$ and $k>0$, we have the following
\begin{align*}
\tilde C(K,k)={{\frac{K(K-1)} 2}\choose {k+K}}\left(\frac{2e^{-x}y^{1-x}}{\sqrt{1-y^2}}\right)^K e^{a(x)}\left(1+O\left(\frac{1}{k}\right)+O\left(\frac{k^{1/16}}{K^{9/50}}\right)\right),
\end{align*}
where for convenience we set $x:=\frac{K+k}{K}$, and
\begin{align*}%
x\mapsto a(x):=x(x+1)(1-y)+\log(1-x+xy)-\frac{1}{2}\log(1-x+xy^2)
\end{align*}%
is a map on $[1,\infty)$ where $y\in [0,1)$ is defined by the following implicit formula,
\begin{equation*}%
xy=\frac 1 2\log\left(\frac{1+y}{1-y}\right).
\end{equation*}
Note that $k= K (x - 1)$. With this in mind, we set $C(K,x):=\tilde C(K,k)$. In order to get the asymptotics for $\Pp_{K,p}$, we aim to find the leading term in the sum in \eqref{eqsumconex} and to prove that every other term is negligible. Therefore, we look at the asymptotic behavior of 
\[x_n:=\text{argmax}_{x\in [1,\infty)}\left  \{C(K,x)\left(\frac{p}{1-p}\right)^{xK}\right\}\] 
as $n$ goes to infinity. We claim that for $1\ll K\ll n$  
\begin{equation}\label{eq:max_x_n}
    x_n=1+c_n^2+o(c_n^2),
\end{equation}
where $c_n:=\frac{K}{2n}+o\left(\frac{K}{2n}\right)$. For clarity, we split the proof of \eqref{eq:max_x_n} in two steps.\\

\noindent \textbf{Step 1}: {\it proof that $\lim_{n\to\infty}x_n=1$.}\\

Let us define $D_n : [1,\infty)\mapsto \mathbb R$ as 
\begin{align*}%
{D_n(x)}&:= \log \left(C(K,x)\left(\frac{p}{1-p}\right)^{xK}\right)\\
&=\log \left({{\frac{K(K-1)} 2\choose {xK}}}\left(\frac{2e^{-x}y^{1-x}}{\sqrt{1-y^2}}\right)^K e^{a(x)}\left(\frac{p}{1-p}\right)^{xK}\right).
\end{align*}%
First    we prove that, for large $n$, the supremum of ${D_n(x)}$ is achieved for $x_n$ close to $1$. 


Using Stirling's expansions of the factorial terms and the dependence on $n$ of $p_n(\theta)$ (see \eqref{eq:def_p}), we obtain the following:

\begin{align*}
e^{D_n(x)}=&\frac{1}{\sqrt{2\pi xK}}\frac{\left(\frac{K(K-1)}{2}\right)^{\frac{K(K-1)}{2}}}{\left(\frac{K(K-1)}{2}-xK\right)^{\frac{K(K-1)}{2}-xK}(xK)^{xK}}e^{O(1/K)}\left(\frac{p}{1-p}\right)^{xK}\\
&\left(\frac{2e^{-x}y^{1-x}}{\sqrt{1-y^2}}\right)^K e^{a(x)}\left(1+O\left(\frac{1}{k}\right)+O\left(\frac{k^{1/16}}{K^{9/50}}\right)\right)\\
=&\frac{1}{\sqrt{2\pi xK(1-\frac{2x}{(K-1)})}}e^{xK+xK \log (K-1)-xK\log(2x)-x^2+O(\frac{x}{K})}\\
&\left(\frac{2e^{-x}y^{1-x}}{\sqrt{1-y^2}}\right)^K e^{a(x)}\left(1+O\left(\frac{1}{k}\right)+O\left(\frac{k^{1/16}}{K^{9/50}}\right)\right)\\
&e^{-xK\log(n)+xK\left(\theta\left(\frac{b_n^2}n\right)^{1/3}-\frac{1}{2}\theta^2\left(\frac{b_n^2}n\right)^{2/3}+O\left(\frac{b_n^2}{n}\right)\right)}.\\
\end{align*}
Notice that $x\leq \frac{K-1}{2}$ and $a(x)=O(xK)$, thus, from the equation above, we get
\begin{align}\label{eqdn}
D_n(x)=-xK\log(\frac n K)-xK\log(2x)+O(xK).
\end{align}
From this equation we deduce that for $n$ large enough and any $x\geq 1$,
\begin{align}\label{eqmajordn}
D_n(x)\leq -\frac 1 2 xK\log(\frac n K)- \frac 1 2 xK\log(2x).
\end{align}
Notice that since $K\ll n$, the leading term in expansion \eqref{eqdn} is $K\log(\frac n K)$ and thus $\lim_{n\rightarrow \infty} \frac{D_n(x)}{K\log(\frac n K)}=-x$. We deduce that there is some $M>4$ such that for any $n$ large enough
\begin{align}\label{eqdnpourm}%
    D_n(1)> -\frac 1 4 MK\log(\frac n K)- \frac 1 4 MK\log(2M),
\end{align}%
Notice from equation \eqref{eqmajordn} that for any $x\geq M$, $D_n(x) < D_n(1)$, thus $\text{argmax} (D_n) \subset [1,M]$. Moreover, the limit $\lim_{n\rightarrow \infty} \frac{D_n(x)}{K\log(\frac n K)}=-x$ admits only one maximum on $[1,M]$, which corresponds to $x=1$. Since the sequence $D_n$ converges uniformly on each compact interval to this limit, the Proposition \ref{propunicon} below  applies on the sequence $(D_n(\cdot))_{n\in \N}$ and we deduce that $\lim_{n\rightarrow \infty} x_n=1=\text{argmax}_{x\in[1,M]}(-x)$.

\begin{proposition}\label{propunicon}
Let $\left(f_n\right)_{n\in \N}\in C^0(X,\mathbb{R})^{\N}$ be a family of continuous functions on the metric space $X$ with compact support. Suppose $\left(f_n\right)_{n\in \N}$ converges uniformly to some continuous function $f$ that admits a unique maximum $x\in X$. Then any sequence $(x_n)_{n\in \N}$ such that $x_n \in argmax (f_n)$ converges to $x$.
\end{proposition}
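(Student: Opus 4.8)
The plan is to deduce the convergence of the maximisers from uniform convergence by a compactness argument, using that a \emph{unique} maximum of the limit is stable. The first step concerns only suprema: writing $M_n:=\sup_X f_n$ and $m:=\sup_X f=f(x)$, uniform convergence gives $|M_n-m|\le\|f_n-f\|_\infty\to 0$, hence $M_n\to m$. Since $x_n\in\mathrm{argmax}(f_n)$ we have $f_n(x_n)=M_n$, and combining this with $|f(x_n)-f_n(x_n)|\le\|f_n-f\|_\infty\to 0$ we conclude $f(x_n)\to m=\max_X f$. In other words, the maximisers $x_n$ of $f_n$ are points at which the limit $f$ comes arbitrarily close to its own maximum.

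The second step is to confine $(x_n)$ to a fixed compact set. When $X$ is compact---which is the case in the application, where $X=[1,M]$---there is nothing to do. More generally, a uniform limit of compactly supported continuous functions vanishes at infinity: for any $\eta>0$, picking $N$ with $\|f_N-f\|_\infty<\eta/2$ gives $\{\,|f|\ge\eta\,\}\subseteq\mathrm{supp}\,f_N$, so $\{\,|f|\ge\eta\,\}$ is a closed subset of a compact set, hence compact; applying this with $\eta=m/2$ and using $f(x_n)\to m$ shows that $x_n$ lies in this compact set for all large $n$. Either way, there is a compact $L\subseteq X$ with $x_n\in L$ for all $n$ large.

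Finally I would argue by contradiction. If $x_n\not\to x$, there are $\varepsilon>0$ and a subsequence $(x_{n_k})$ with $d(x_{n_k},x)\ge\varepsilon$ for all $k$; by compactness of $L$ we may pass to a further subsequence with $x_{n_{k_j}}\to y\in L$, and then $d(y,x)\ge\varepsilon$, so $y\ne x$. Continuity of $f$ gives $f(y)=\lim_j f(x_{n_{k_j}})=m=\max_X f$, so $y$ is a maximiser of $f$ distinct from $x$, contradicting uniqueness. Hence every subsequence of $(x_n)$ has a further subsequence converging to $x$, so $x_n\to x$, which is the claim.

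The only genuinely delicate point is the confinement step: without the compact-support hypothesis (or compactness of $X$) the maximisers of $f_n$ could escape to infinity while still $f_n\to f$ uniformly, so that hypothesis, innocuous as it looks, is exactly what keeps the statement true. Once $(x_n)$ is trapped in a fixed compact set, the remainder is the standard fact that $\mathrm{argmax}$ behaves well under uniform convergence toward a function with a unique maximiser, and in the intended use (the sequence $D_n$ on the compact interval $[1,M]$, with limit $x\mapsto -x$ whose unique maximiser on $[1,M]$ is $1$) the confinement is immediate.
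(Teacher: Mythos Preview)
Your proof is correct. The paper takes a slightly more direct route: rather than showing $f(x_n)\to m$ and then extracting subsequential limits, it fixes a ball $B_\eta(x)$ around the unique maximiser and argues that for $n$ large one has $f_n(y)<f_n(x)$ for every $y\notin B_\eta(x)$ (using that $\sup_{y\notin B_\eta(x)}f(y)<f(x)$ by uniqueness, together with uniform convergence), so that $\mathrm{argmax}(f_n)\subset B_\eta(x)$ immediately. Your subsequence/compactness argument is equally standard and in one respect cleaner: you make the confinement step explicit, whereas the paper's version tacitly relies on the strict gap $\sup_{y\notin B_\eta(x)}f(y)<f(x)$, which itself needs the same compactness or decay-at-infinity reasoning you spell out. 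In the intended application both collapse to the compact case $X=[1,M]$, where either argument is immediate.
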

We could not find a reference for this proof, so we added a short proof of it in Section~\ref{appa}.\\

\noindent \textbf{Step 2}: {\it finer asymptotics for $x_n$.}\\

From the previous step we can assume that there is $M>0$ such that $x_n \leq M$ for any $n \in \N$, and we recall that the element $y\in [0,1)$ is defined as in the article \cite{bdmckay} by the bijection  
\begin{equation}
    \label{eq:expansion_x}
x(y)=\sum_{k=0}^\infty \frac{y^{2k}}{2k+1}=\frac{1}{2y}\log\left(\frac{1+y}{1-y}\right),\end{equation}
with element $y=0$ whenever $x=1$. 

To get a finer approximation for $x_n$, we now want to study the difference between $D_n(x)$ and $D_n(1)$ for $x(y)\leq M$ which means in particular that $y$ is bounded away from $1$: there is a constant $1>c>0$ such that $y<c$. 
We introduce the quantity $a_n$ so that \\
$D_n(1)=Ka_n+K\log(2)+O(\log(K))$. More concretely,
\begin{align}\label{eq:D_n_1_first_order}
a_n&:=\left( \log (K-1)-\log(n)-\log(2)+\left(\theta\left(\frac{b_n^2}n\right)^{1/3}-\theta^2\left(\frac{b_n^2}n\right)^{2/3}\right)+O\left(\frac{b_n^2}{n}\right)\right).
\end{align}
We also set $c_n:=\exp(a_n)=\frac{k}{2n}+o(1)$ and we define $\hat D_n :[0,c/c_n]\mapsto \mathbb R$ as an approximation of the difference between $D_n(x(c_n y))$ and $D_n(1)$ up to $\log(K)$, that is
\begin{align*}
D_n(x(c_ny))=D_n(1)+K\hat{D}_n(y)+O(\log(K)).
\end{align*}%
Using Taylor expansion, notice that the map
\begin{align*}%
x\mapsto a(x)=x(x+1)(1-y)+\log(1-x+xy)-\frac{1}{2}\log(1-x+xy^2)
\end{align*}%
is a bounded function in a neighborhood of $x=1$, thus an explicit expression of $\hat D_n(y)$ is
\begin{align*}
\hat D_n(y)=&a_n(x(c_ny)-1)-x(c_ny)\log(x(c_ny))\\&+(1-x(c_ny))\log(yc_n)
-\frac{1}{2}\log(1-y^2c_n^2)-\frac{(x(c_ny)^2-1)}{K}.
\end{align*}
Using \eqref{eq:expansion_x} 
%
\begin{align}
\hat D_n(y)=&a_n\sum_{k=1}^\infty \frac{(c_ny)^{2k}}{2k+1}\nonumber\\
&-\sum_{k=0}^\infty \frac{(c_ny)^{2k}}{2k+1}\log(\sum_{k=0}^\infty \frac{(c_ny)^{2k}}{2k+1})-\sum_{k=1}^\infty \frac{(c_ny)^{2k}}{2k+1}\log(yc_n)\nonumber\\
&-\frac{1}{2}\log(1-y^2c_n^2)+O\left(\frac 1 K \sum_{k=1}^\infty \frac{(c_ny)^{2k}}{2k+1}\right)\nonumber\\
&=(a_n-\log(c_n))\frac{(c_ny)^{2}}{3}-\frac{(c_ny)^{2}}{3}-\frac{(c_ny)^{2}}{3}\log(y)\nonumber\\
&\qquad +\frac{1}{2}y^2c_n^2+O\left((yc_n)^4+\frac {(yc_n)^2} K\right)\label{eqaproxy}\\ 
\end{align}
%
%
we then have, from \eqref{eqaproxy}, 
\begin{align*}%
\frac{\hat D_n(y)}{c_n^2}=-\frac{y^{2}}{3}-\frac{y^{2}}{3}\log(y)+\frac{1}{2}y^2+O\left(y^2(yc_n)^2+\frac {y^2} {K}\right).
\end{align*}%
In particular, there is some $1<y_0$ such that for any $n$ large enough and $y\geq y_0$,
\begin{align*}
    \frac{\hat D_n(y)}{c_n^2}\leq -\frac{y^2}{12}\log(y).
\end{align*}
On $[0,y_0]$, $\frac{\hat D_n(y)}{c_n^2}$ converges uniformly to $D(y):=-\frac{y^{2}}{3}-\frac{y^{2}}{3}\log(y)+\frac{1}{2}y^2$ and
the unique maximizer of $D(y):=-\frac{y^{2}}{3}-\frac{y^{2}}{3}\log(y)+\frac{1}{2}y^2$ is $y=1$.
Thus applying Proposition~\ref{propunicon}, any maximizing sequence $(y_n)_{n\in \N}$ of $\hat{D}_n$ satisfies $y_n=1+o(1)$ and thus any maximizing sequence $x_n$ of $D_n$ satisfies 
\begin{align*}
x_n=1+c_n^2+o(c_n^2).
\end{align*}  
This concludes the proof of {Step 2} and thus the proof of \eqref{eq:max_x_n}.

As a consequence of \eqref{eq:max_x_n} we obtain that in a bounded neighborhood of $x=1$ (i.e., for any $c_ny$ in a bounded neighborhood of $0$),
\begin{align}%
D_n(x(c_ny))=Ka_n+K\hat{D}_n(y)+K\log(2)+O(\log(K)),\label{eqdecompDn}
\end{align}%
and therefore, for $y=1$,
\begin{align*}%
D_n(x(c_n))=Ka_n+\frac{K} 6 c_n^2+K\log(2)+O\left(Kc_n^4+Kc_n^2\left(\frac{b_n^2}{n}\right)^{1/3}+c_n^2+\log(K)\right).
\end{align*}%
%
As a next step, we prove that the terms different from the argmax $x_n$ of
\begin{align*}
P_{K,p}&=\sum_{k=-1}^{\frac{K(K-1)}{2}}\tilde C(K,k)p^{k+K-1}(1-p)^{\frac{(K-2)(K-1)}{2}-k}
\end{align*}%
give a negligible contribution to $\Pp_{K,p}$. More precisely, we prove that for any $K\succeq n^{2/5}$,
\begin{align}
P_{K,p}&=(1-p)^{\frac{(K-2)(K-1)}{2}}e^{D_n(x(c_n))+O(\log(K))} \notag\\
&=p^{K-1}(1-p)^{\frac{(K-2)(K-1)}{2}}K^{K-2}e^{C_n}.\label{eqexpansionconnexe}
\end{align}
%
%
%
From equation \eqref{eqdn}, we know that for $n$ large enough and any $x\geq 1$,
$$
D_n(x)\leq -\frac 1 2 xK\log(\frac n K)- \frac 1 2 xK\log(2x)
$$
Thus if we fix some constant $M>0$ satisfying equation \eqref{eqdnpourm}, then
\begin{align*}
    \sum_{k=k_M}^{\frac{K(K-1)}{2}}e^{D_n(\frac{k+K}{K})} = O(1),
\end{align*}%
where $k_M:=\inf\{k\geq1: (k+K)/K\geq M\}$. We are left to investigate
$$
\sum_{k=1}^{MK}e^{D_n(\frac{k+K}{K})}.
$$  
According to equation \eqref{eqdecompDn}, the only term depending on $x$ in $D_n(x)$ is the term $\hat{D}_n(y)$. 
Thus it is enough to prove that 
\begin{align*}%
\sum_{k=1}^{MK}&C(K,\frac{k+K}K)p^{k+K}(1-p)^{\frac{K(K-1)}{2}-(k+K)}\\
&=e^{Ka_n+K\hat{D}_n(1)+K\log(2)-\frac{(K-2)(K-1)} {2n} \left(1+\theta \left(\frac{b_n^2}{n}\right)\right)+o(\frac{K^2} {n^2})}.
\end{align*}%
and that
\begin{align}\label{eqconcsum}
\sum_{k=1}^{MK}e^{K\hat{D}_n(y_{k,n})}=e^{K\hat{D}_n(1)+O(\log(K))},
\end{align}
where $y_{k,n}$ is the unique element $y\in \mathbb{R}_+$ such that $x(yc_n)=\frac{k+K}{K}$. Since $x(\cdot)$ is a non decreasing bijection from $(0,1)$ to $(1,\infty)$ for each $K\in \N$ we can associate to $k\in [K]$ a unique element $a_k\in (0,1)$ (depending on $K$) such that $\frac{K+k}{K}=x(a_k)$.
Let $m\in [K]$ be the smallest index such that $\hat D_n(y_{m,n}) \leq -2\log(K)$. The sum $\sum_{k\geq m}e^{K\hat D_n(y_{k,n})}$ is then bounded and we only need to investigate the size of $m$ to conclude. Let $k_0\in\N$ be the first index such that 
\begin{align*}
    a_{k_0}\geq
    \begin{cases}
        \left(\frac{2e\log(K)} K\right)^{1/2} &\text{ when }K\ll n^{2/3},     \\
        c_n^{1/2} &\text{otherwise}. 
    \end{cases}
\end{align*}
Then for $n$ large enough and any $k\geq k_0$, $y_{k,n} \geq \frac{a_k}{c_n}$ and
\begin{align*}
Kc_n^2\frac{y_{k,n}^2} 3 \left(\frac 1 2 -\log(y_{k,n})\right)&\ll-2\log(K).
\end{align*}
Thus from equation \eqref{eqaproxy}, for $n$ large enough, we have that $K\hat D_n(y_{k,n})\leq -2\log(K)$ for any $k\geq k_0$.
Recall that $x=1+(yc_n)^2+O((yc_n)^4)$. Hence,
\begin{align*}%
k_0=Ka_{k_0}^2+O(Ka_{k_0}^4)=o(K),
\end{align*}%
and we deduce that $m\leq k_0=o(K)$ and obtain  \eqref{eqconcsum}.
Indeed, from the previous equation we deduce that $\sum_{k\geq m}e^{k\hat D_n(y_{k,n})}$ is bounded and thus 
\begin{align*}
   e^{\hat D_n(1)+O(\log(K))}\leq  \sum_{k\geq 1}^{KM}e^{k\hat D_n(y_{k,n})}&\leq\sum_{k\geq m}^{KM}e^{k\hat D_n(y_{k,n})}+me^{\hat D_n(1)+O(\log(K))}.
\end{align*}
It follows that $\sum_{k\geq 1}^{KM}e^{k\hat D_n(y_{k,n})}=e^{\hat D_n(1)+O(\log(K))}$, which is \eqref{eqconcsum}.

To summarize, \eqref{eqdecompDn} and \eqref{eqconcsum} together prove that 
\begin{align*}
    \Pp_{K,p}&=\sum_{k=1}^{MK} e^{D_n(x_k)}+O(1)=e^{Ka_n+K\log(2)+o(1)}\sum_{k=1}^{MK} e^{\hat D_n(y_{k,n})}\\
    &=e^{Ka_n+K\log(2)+\hat D_n(1)+O(\log(K))},
\end{align*}
where $a_n$ is defined in \eqref{eq:D_n_1_first_order}. The expansion of this latter term then gives the expression \eqref{eq:prob_conn_regime_meso}.
More precisely, for any $n^{2/5}\leq K$ such that $K=o(n^{3/4}b_n^{1/2})$,
\begin{align*}
\log(\Pp_{K,p})&=\log(p^{K-1}(1-p)^{\frac{(K-2)(K-1)}{2}}K^{K-2})+C_n.
\end{align*}
To conclude the point (i) of the Proposition~\ref{lemgraphconnex}, notice that in the particular case where $K\sim u(nb_n)^{2/3}$  then  $Kc_n^4\rightarrow 0$ and the error term is then  $O(\log(K))$ : 
%
%
\begin{align*}
\log(\Pp_{K,n})&=K\log\left(\frac{n}{K}\right)+\theta K\left(\frac{b_n}{n}\right)^{1/3}-\frac{K^2}{2n}\left(1+\theta\left(\frac{b_n^2}{n}\right)^{1/3}\right)\\
&\quad+\frac{K^3}{24n^2}-\theta^2 K\left(\frac{b_n}{n}\right)^{2/3}+O(\log(K)).
\end{align*}
%

    
\section{Probability of not having large components: proof of Proposition~\ref{proppetitecomp}}\label{sectionpetitecomponent}

We prove Proposition~\ref{eqproppetitecomplow} by separately showing the upper bound and the lower bound in the two following subsections. 
Notice that the right-hand side of the limits in Proposition~\ref{eqproppetitecomplow} depends heavily on the sign of $\theta$. Thus in each part of the proof the case $\theta >0$ and $\theta <0$ will be treated separately.

A crucial role is played by a special series defined as the sum of  $(\lambda_k(\omega))_{k\in\N}$, where, for $\omega\in[0,1]$ and $k\in\N$,
\[
\lambda_k(\omega):=k^{k-2}e^{-\omega k}\frac{\omega^k}{k!}.
\]
As we will see, this series is related to typical configuration of the graph and to the \textit{Borel distribution}. By the property of this {distribution} it is known that, for $\omega\in[0,1]$, we have
\begin{align}
   \sum_{k=1}^{\infty}k\lambda_k(\omega)= &\sum_{k=1}^{\infty} k^{k-1}e^{-\omega k}\frac{\omega^k}{k!}=\omega, \label{eqseries}\\
\sum_{k=1}^{\infty}\lambda_k(\omega)=&\sum_{k=1}^{\infty} k^{k-2}e^{-\omega k}\frac{\omega^k}{k!}=\omega(1-\frac {\omega}2)\label{eqseries2}.
\end{align}
We are going to exploit such properties to build a suitable recovery sequence, i.e., a sequence
$l^n(\omega_n)$ such that the probability of the event $\{L^n=l^n(\omega_n)\}$ will give us the desired lower bound to the probability of the event $E_{\alpha_n}$. 

In what follows, we fix $\omega$ as the sequence of elements $\omega_n:=1+\theta \left(\frac{b_n^2}{n}\right)^{1/3}+o(\left(\frac{b_n^2}{n}\right)^{1/3})$ 
for some $\theta \in \mathbb R$. And with a slight abuse, we fix the following notations:
\begin{align*}
    \lambda_k&:= \lambda_k(1)=k^{k-2}e^{-k}\frac{1}{k!}\\
    A_k &:=\frac{N}{\omega}\lambda_k e^{1+ka}
\end{align*}
with $a:=\log(\omega)-\frac{\omega}{2}$. To the purpose of the proofs, we also define the map $J_n(\cdot)$ which associates to $l\in \mathbb R^{\N}$ the following quantity
\begin{align}\label{eq:J_n}
    J_n(l):=\sum_{k\geq 1}l_k\log(\frac{A_k}{l_k}).
\end{align}
%
The proofs of the lower and upper bounds mainly consist in estimating this term $J_n(l)$ on the particular sequence $l^n$ and in proving that 
$$
\PP_{N,p}(L^{N}=l)=J_n(l)e^{o(b_n^2)}.
$$
\subsection{Proof of Proposition~\ref{proppetitecomp}: lower bound }\label{subseclowboundmicro}

To prove the lower bound, we provide some recovery sequence $l^\delta \in E_{\alpha_n}$ such that
\begin{align}\label{eqlowerboundmicro1}
\liminf_{n\rightarrow\infty}\frac 1{b_n^2}\log &  \PP_{N,p}(L^N=l^\delta)\geq 
     -\frac 16 \theta^31_{(\theta\geq 0)}.
\end{align}
{\bf Step 1: }{\it Construction of a recovery sequence $l^\delta \in E_{\alpha_n}$}.\\
We have to split our choice of the recovery sequence according to the sign of $\theta$. First we define $l\in \mathbb{R}^\N$ component-wise as 
\begin{align*}
l_k :=
\begin{cases}
\frac{N}{\omega}\lambda_k& \text{ when } \theta > 0,\\
\frac{1}{\omega}N\lambda_ke^{-k\xi} &\text{ when } \theta \leq 0.
\end{cases}
\end{align*}
with $\xi:=-(1-\omega +\log(\omega))$. From \eqref{eqseries} we get that
\begin{align}\label{eq:sum_opt_l}
\sum_{k=1}^{\infty} k l_k = 
\begin{cases}
\frac{N}{\omega}& \text{when } \theta >0, \\
N & \text{when } \theta \leq 0.
\end{cases}
\end{align}
%
The sequence $l^\delta$ is then made of elements of $E_{\alpha_n}$ obtained from $l$ by slight perturbations around the values of $l_k$, such that $l^{\delta}$ meets the constraint of having all integer entries with zero entries above $k=\epsilon (nb_n)^{2/3}$ and having total mass $\sum_{k=1}^Nkl_k=N$. Let us explain these perturbations. 
First, we define $q\in\mathbb N$  as
\begin{align*}
q :=
\begin{cases}
\left\lfloor\sum_{k=1}^\infty k(l_k-\lfloor l_k\rfloor)+N(1-\frac 1 \omega)\right\rfloor &\text{ when } \theta > 0,\\
\left\lfloor\sum_{k=1}^\infty k(l_k-\lfloor l_k\rfloor)\right\rfloor &\text{ when } \theta \leq 0.
\end{cases}
\end{align*}
Let us explain the role of this $q$: it is supposed to compensate for the missing mass that is lost 
when substituting $l_k$ with $\lfloor l_k \rfloor$ for any $k$ in order for the entries to be integer. Since $k\mapsto l_k$ is non-increasing, for all $k\geq\inf\{h\colon l_h<1\}$, we have $l_k-\lfloor l_k\rfloor\equiv l_k$. When $\theta>0$, $q$ also includes the missing mass due to \eqref{eq:sum_opt_l}, leading to the additional term $N(1-\frac 1 \omega)$.
We now choose some sequence $\delta$ of elements of $\mathbb R^\N$ such that $l^\delta=l+\delta$. Notice that, by abuse of notation, although $l$ and $l^{\delta}$ depend on $n$, we hide this dependence from the notation. Recall that $l^\delta \in E_{\alpha_n}$, which translates into the three following constraints on $l^\delta$.
\begin{enumerate}
\item $l^\delta_k\in \N$  for any $k\geq 0$. \\
\item $\sum_{k\leq \epsilon (nb_n)^{2/3}}kl_k^\delta=N$, \\
\item $l^\delta_k=0$ for any $k\geq \epsilon (nb_n)^{2/3}$.
\end{enumerate}
To satisfy the first and last conditions, we spread the missing mass $q$ over a set of indices above the (arbitrary) threshold $\lfloor n^{2/3}\rfloor$ and below the index $r$ defined as 
%
\begin{align*}
r:=\sup\{j\geq \lfloor n^{2/3}\rfloor:\sum_{k=\lfloor n^{2/3}\rfloor}^jk\leq q\},
\end{align*}%
when this set is not empty. Effectively, this increases the value of $l_k$ by one for all $k\in \{\lfloor n^{2/3}\rfloor,\ldots,r\}$. If  $q<\lfloor n^{2/3}\rfloor$, this step is skipped and we will deal with the missing mass later.
The procedure above produces the sequence 
%
\begin{align*}
\tilde \delta_k:=
\begin{cases}
\lceil l_k\rceil-l_k &\text{ if } \lfloor n^{2/3}\rfloor\leq k\leq r,\\
\lfloor l_k\rfloor - l_k &\text{ otherwise},
\end{cases}
\end{align*}
where choosing $\lceil l_k\rceil-l_k $ when $k\in \{\lfloor n^{2/3}\rfloor,\ldots,r\}$ precisely does what we have just explained.
The sequence $l^{\tilde \delta}$ satisfies the first condition since the entries are integer.  
We now change $\tilde \delta$ for $\delta$ so that $l^\delta$ satisfies $\sum_{k\leq \epsilon (nb_n)^{2/3}}kl_k^\delta=N$, such that it will satisfy also the second and third condition. 
Let $s:=q-\left\lfloor\sum_{k=\lfloor n^{2/3}\rfloor}^rk \right\rfloor$ when  $q\geq\lfloor n^{2/3}\rfloor$, and $s:=q$ otherwise. By definition of $r$, we see that $s$ is necessarily less or equal than $r$. Furthermore, let
%
\begin{align*}
\delta_k:=
\begin{cases}
\tilde \delta_k+1 &\text{when } k=s,\\
\tilde \delta_k &\text{for other }k>1,
\end{cases}
\end{align*}
 and $\delta_1$ such that $l_1+\delta_1=N-\sum_{k=2}^\infty (l_k+\delta_k)$.
The sequence $l^\delta:=l+\delta$ is an element of $E_{\alpha_n}$ for $n$ large enough. Indeed, by construction we have $l^\delta \in \N^{\N}$ and $\sum kl_k^\delta=\sum kl_k+N(1-\frac 1 \omega)1_{\{\theta >0\}}=N$. To prove that $l^\delta_k=0$ for any $k\geq \epsilon (nb_n)^{2/3}$, we introduce the index $m$ defined as
\begin{align*}
    m:=\inf\{k\geq1:~l_k<1\}.
\end{align*}
Whatever the sign of $\theta$, we can give a rough estimate of $m$ from the definition of $l$. Indeed, we see that $l_k=O(Nk^{-5/2})$ and thus $m=O(N^{2/5})$. Since any element $l_k$ grows with $n$ we also know that $m\xrightarrow[n\to \infty]{}\infty$. In this way we see that, since $l_k$ is a decreasing sequence in $k$,  $l^\delta_k=0$ for any $k\geq m$ except whenever $k\in \{\lfloor n^{2/3}\rfloor ,\dots,r\}\cup\{s\}$. but the last index $r$ as defined is itself by construction of size 
\begin{align*}
r=O\left(\frac N {n^{2/3}}\right)=o(b_n^2)=o((nb_n)^{2/3}),
\end{align*}
thus $l^\delta$ has no positive coordinate above $\epsilon(nb_n)^{2/3}$ and so $l\in E_{\alpha_n}$.\\

\noindent {\bf Step 2: }{\it The sequence  $l^\delta$ satisfies \eqref{eqlowerboundmicro1}.}\\
To compute $\PP_{N,p}(L^N=l^\delta)$, we use equation \eqref{eqprimord} with a lower estimate of the term $\Pp_{k,p}$ by the probability of having precisely the graph connected via a spanning tree:
$\Pp_{k,p}\geq k^{k-1}p^{k-1}(1-p)^{\frac12(k-2)(k-1)}$. 
Then we use  Stirling formula for all the factorials appearing in the lower bound. Recall the formula: for any $n\in \N^*$,  $n!=\sqrt {2\pi n} \left(\frac{n}{e}\right)^ne^{\frac{1+\gamma_n}{12n+1}}$ with $0\leq \gamma_n \leq 1$. Thus applying all of the above  to equation \eqref{eqprimord} for $l^\delta$ we get
\begin{align*}
\log&(\PP_{N,p}(L^N=l^\delta))\\ 
&\geq \sum_{k=1}^Nl^\delta_k\log\left( \frac{N^kp^{k-1}k^{k-2}(1-p)^{\frac{nk}{2}-3k-2}}{e^kk!l_k^\delta}\right) \\
&\quad+\frac{1}{2}\log(2\pi N)-\frac{1}{2}\sum_{k=1}^\infty\left(\frac{1+\gamma_{l^\delta_k}}{12l^\delta_k+1}+\log(2\pi l^\delta_k)\right)1_{\{l^\delta_k>1\}}\\
&\geq F(l^\delta)+o(b_n^2),
\end{align*}
%
with the convention $0\log0=0$ and $F(l^\delta)$ defined as
\begin{align*}
F(l^\delta) &:= \frac{1}{2}\log(2\pi N)-\frac{1}{2}\sum_{k=1}^\infty\left(\frac{1+\gamma_{l^\delta_k}}{12l^\delta_k+1}+\log(2\pi l^\delta_k)\right)1_{\{l^\delta_k>1\}}\\
&\quad +\sum_{k=1}^\infty l^\delta_k\log\left(\frac{A_k}{l^\delta_k}\right)+O(1)\\
&=J_n(l^\delta)-\frac{1}{2}\sum_{k=1}^\infty\left(\frac{1+\gamma_{l^\delta_k}}{12l^\delta_k+1}+\log(2\pi l^\delta_k)\right)1_{\{l^\delta_k>1\}}+o(b_n^2),
\end{align*}
where we use the definition of  $J_n(l^\delta)$ in \eqref{eqlowerboundmicro1} and the fact that $\log N=o(b_n^2)$ to go from the first to the second line. Therefore we see that we can lower bound the logarithm of the probability with the function $J_n$ computed in $l^\delta$ plus an error that comes from the use of Stirling formula.

First we focus on computing the value of $J_n(l^\delta)$, which 
we rewrite as
%
\begin{align}
J_n(l^\delta)&=J_n(l)+\sum_{k}\delta_k+a\sum_k k \delta_k-\sum_{k}(l_k+\delta_k)\log(1+\frac{\delta_k}{l_k}) \nonumber\\
&=\tilde J_n(l)+\sum_{k}\delta_k -\sum_{k}(l_k+\delta_k)\log(1+\frac{\delta_k}{l_k})\label{eqdevabovewind} 
\end{align}
where $\tilde J_n(l)$ is 
\begin{align*}
\tilde J_n(l) := J_n(l) +a\sum_{k=1}^nk\delta_k,
\end{align*}
and $a=\log( \omega )-\frac {\omega}2$ was defined at the beginning of this section. Notice that $\tilde J_n(l)$ does not depend on $\delta$, since, by construction, 
\begin{align*}
\sum_{k}k\delta_k :=
\begin{cases}
N(1-\frac1{\omega})& \text{ when } \theta > 0,\\
0 &\text{ when } \theta \leq 0.
\end{cases}
\end{align*}
Let us underline that $\tilde J_n(l)$ accounts for the leading terms of $J_n(l^\delta)$, we will prove later that the rest is negligible.

The sequence $l_k$ has been chosen so that whatever the sign of $\theta$ is, $\tilde J_n(l)$ corresponds to the estimate in \eqref{eqlowerboundmicro1}:
\begin{align*}
\tilde J_n(l)=
\begin{cases}
-\theta^3b_n^2+o(b_n^2) &\text{ when } \theta > 0,\\
o(b_n^2) &\text{ when } \theta \leq 0.
\end{cases}
\end{align*}
To see this, note that when $\theta \leq 0$, $\sum_{k=1}^nkl_k=N$ (in other words $\sum_{k=1}^nk\delta_k=0$) and by direct application of equations \eqref{eqseries} and \eqref{eqseries2},
\begin{align*}
\tilde J_n(l)&=N( a+\xi)+N(1-\frac{\omega}2) \\
&=\left(\omega-1-\frac{\omega}{2}+O(\frac 1 N)\right)+1-\frac{\omega}2=o(b_n^2).  
\end{align*}
On the other hand, when $\theta>0$,  we have $\sum_{k=1}^nkl_k= N / \omega$,  equivalently $\sum_{k=1}^nk\delta_k=N(1-\frac 1 \omega)$, thus applying again the equation in \eqref{eqseries2},
\begin{align*}
\tilde J_n(l)&=aN(1-\frac 1 \omega)+J_n(l)=aN(1-\frac 1 \omega)+ a\frac N \omega+N\frac{\omega}{2}\\
&=-\frac 1 6 \theta^3b_n^2+o(b_n^2).\nonumber  
\end{align*}
Now we investigate the remaining terms in \eqref{eqdevabovewind} and prove they are negligible. We substitute $\delta$ with its explicit expression in terms of $l$.
Recall that $l_k$ is a decreasing sequence and  $l^\delta_k=0$ for any $k\geq m$, except whenever $k\in \{n^{2/3},\dots,r\}\cup\{s\}$. 
%
\begin{align}
J_n(l^\delta)-\tilde J_n(l)&\geq \sum_{k=1}^\infty \lfloor l_k\rfloor-l_k  \nonumber\\
&-\sum_{k>1}^m \lfloor l_k\rfloor \log(\frac{\lfloor l_k\rfloor}{l_k})- (\lceil l_1\rceil+2) \log(\frac{\lceil l_1\rceil+2}{l_1})\nonumber\\
&-2\sum_{k=\lfloor n^{2/3}\rfloor}^r \lceil l_k\rceil \log(\frac{\lceil l_k\rceil}{l_k})-\lceil l_s\rceil \log(\frac{\lceil l_s\rceil}{l_s})\nonumber\\
&\geq -m-\sum_{k\geq m}^\infty l_k\label{eqlowerabove}\\
&-2\sum_{k=\lfloor n^{2/3}\rfloor}^r \lceil l_k\rceil \log(\frac{\lceil l_k\rceil}{l_k})-\lceil l_s\rceil \log(\frac{\lceil l_s\rceil}{l_s})\label{eqlowerabove2}\\
&- (\lceil l_1\rceil+2) \log(\frac{\lceil l_1\rceil+2}{l_1})\label{eqlowerabove3}
\end{align}
We now show that each of he three different terms on the right-hand side is $o(b_n^2)$. Recall that $r=O(\frac N {n^{2/3}})=o(b_n^2)$ and thus the term in \eqref{eqlowerabove2} is $O(2r\log(2))=o(b_n^2)$. For the last term \eqref{eqlowerabove3}, notice that for any $x\in \mathbb R_+^*$, $\log(\frac{\lceil x\rceil+n}{x})\leq \frac {1+n} x$ and thus the last term \eqref{eqlowerabove3} is bounded from above by $8+\log(l_s)+\log(l_1)=O(\log(n))=o(b_n^2)$.

We are now left to bound the sum $\sum_{k\geq m}l_k$. Recall that $\frac 1 \omega \lambda_k N\underset{k\rightarrow \infty}\sim  \frac{N}{\sqrt{2\pi}k^{5/2}\omega}$. If $\theta > 0$, then $m\sim \frac{N^{2/5}}{\sqrt{2\pi}\omega}$, and  thus
\begin{align*}
\sum_{k\geq m}l_k &\sim \sum_{k\geq m}\frac{N}{\sqrt{2\pi}k^{5/2}\omega}\sim \frac{N}{\sqrt{2\pi}m^{3/2}\omega}\\
&\sim \frac{N^{2/5}}{\sqrt{2\pi}\omega}=o(b_n^2).
\end{align*} 
Suppose now that $\theta \leq 0$, then
$f(t):=\frac{N}{\sqrt{2\pi}t^{5/2}\omega}$, since $m \rightarrow \infty$, the comparison between series and integrals allows us to write
\begin{align}%
    \frac{N}{\omega}\sum_{k\geq m} \lambda_k e^{-k\xi_n}&\sim \int_{m}^{\infty}f(t)e^{-t\xi_n}\mathrm{d}t\\
    &\sim \frac{1}{\xi_n } \int_{m\xi_n}^{\infty}f\left(\frac{u}{\xi_n}\right)e^{-u}\mathrm{d}u\\
    &\sim \xi_n^{3/2}\int_{m\xi_n}^{\infty}\frac{N}{\sqrt{2\pi}u^{5/2}\omega}e^{-u}\mathrm{d}u.\label{eqsumconvmicro}
\end{align}%
We fix some $\epsilon$ so that $0<\epsilon<\frac 1 3$, then,  since $\lambda_k\sim \alpha \frac 1 {k^{5/2}}$ for some $\alpha>0$, whenever $m\geq N^{\frac 2 5 (1-\epsilon)}$ the sum $\sum_{k\geq m} l_k\leq \frac{N}{\omega}\sum_{k\geq m} \lambda_k e^{-k\xi_n}=o(n^{4/10})=o(b_n^2)$ according to hypothesis \ref{hypbn} made on $b_n$  .
Thus if we split the sequence $(\frac 1 {b_n^2}\sum_{k\geq m_n}l_k)_{n\geq0}$ into two subsequence:
\begin{itemize}
\item[(i)]   one made of   the indexes $n$ such that $m_n\xi_n\geq \epsilon \log(N)$,
\item[(ii)] the other made of all the other indexes.
\end{itemize}
  The sequence $(i)$ converges from the convergence to $0$ of the integral \eqref{eqsumconvmicro}. As for the second, we notice that by definition of $m$ and the fact that $\lambda_k\sim \alpha \frac 1 {k^{5/2}}$, 
\begin{align*}%
    \frac{N}{\omega}\lambda_me^{-m\xi_n}\leq 1,
\end{align*}%
thus,
\begin{align*}
m\geq \frac 1 {\omega \alpha}N^{\frac 2 5 (1-\epsilon)}.
\end{align*}
Then according to what we said about the value of $\epsilon$, that subsequence also converges to $0$. Thus $\sum_{k\geq m}l_k=o(b_n^2)$  in any case and the term \eqref{eqlowerabove} is also $o(b_n^2)$.
\begin{align*}
J_n(l^\delta)\geq \tilde J_n(l)+o(b_n^2).
\end{align*}%
Finally notice that since $m=O(N^{2/5})$, 
\begin{align*}
F(l^\delta)\geq J_n(l^\delta)-m\log(N)+o(b_n^2), 
\end{align*}
thus $F(l^\delta)=J_n(l^\delta)+o(b_n^2)$, proving the identity 
\begin{align*}
\lim_{n\rightarrow \infty}\frac{1}{b_n^2}\log(\PP_{N,p}(L_k=l_k^\delta))=-\frac{1}{6}\theta^31_{(\theta\geq 0)}.
\end{align*}%

\subsection{Proof of Proposition~\ref{proppetitecomp}: upper bound}

Since $\PP_{N,p}(L^N\in E_{\alpha_n})$ is a probability, the upper bound when $\theta < 0$ is trivial.
Thus we focus here on the proof of the upper bound when $\theta \geq 0$.



For any $l\in  E_{\alpha_n}$, since $\sup\{k, l_k>0\}=o(N^{3/4}b_n^{1/2})$, 
in order to get an upper estimate of $\PP_{N,p}(L^N=l)$, we can use equation \eqref{eqprimord} and  the approximations  given by point (i) and (ii) of Proposition~\ref{lemgraphconnex}.
This means that there is some $C>0$ such that for any $n\in \N$, 

\begin{align}
\log(\PP_{N,p}(L^N=l))\leq&
\sum_{k=1}^Nl_k\log\left( \frac{N^kp^{k-1}k^{k-2}(1-p)^{\frac{nk}{2}-3k-2}}{e^kk!l_k}\right)+\sum_{k=1}^Nl_kC_n(k)\nonumber\\
&+\frac{1}{2}\log(2\pi N)-\frac{1}{2}\sum_{k=1}^\infty\left(\frac{1+\gamma_{l_k}}{12l_k+1}+\log(2\pi l_k)\right)1_{\{l_k>1\}}\nonumber\\
\leq& 
J_n(l)+\frac 1 6\sum_{N\geq k\geq n^{2/5}}\frac{k^3}{4n^2}l_k+o(b_n^2)\label{eqmicroupprincip}\\
&+C\sum_{N\geq k\geq n^{2/5}}l_k\left(k\left(\frac{k}{2n}\right)^2\left(\frac{b_n^2}{n}\right)^{1/3}+k\left(\frac{k}{2n}\right)^3+\log(k)\right),\nonumber
\end{align} 

where $J_n$ is defined in \eqref{eq:J_n}. The second inequality comes from removing the negative quantity $\sum_{k=1}^\infty\left(\frac{1+\gamma_{l_k}}{12l_k+1}+\log(2\pi l_k)\right)1_{\{l_k>1\}}$.
{
First, notice that the sum in the last line of the inequality above is of size $\epsilon b_n^2+o(b_n^2)$. Indeed, since $\sum_{k=1}^nkl_k=N$ and $l_k=0$ for all $k\geq \epsilon (nb_n)^{2/3}$, 
\begin{align*}
   \sum_{n^{2/5}\leq k\leq \epsilon(nb_n)^{2/3}}&l_k\left(k\left(\frac{k}{2n}\right)^2\left(\frac{b_n^2}{n}\right)^{1/3}+k\left(\frac{k}{2n}\right)^3+  \log(k)\right)\\
   &\leq
   \epsilon b_n^2+C\log(N)\sum_{n^{2/5}\leq k\leq \epsilon(nb_n)^{2/3}}l_k+o(b_n^2)\\
   &\leq \epsilon b_n^2+ \frac{N}{n^{2/5}}\log(N)+o(b_n^2)\\
   &\leq \epsilon b_n^2+o(b_n^2).
\end{align*}
The leading terms of $\log \PP_{N,p}(L^n=(l_k)_{k\in \N})$ are thus those in line \eqref{eqmicroupprincip}. We will now find a suitable upper bound for 
$J_n(l)+2\sum_{N\geq k\geq n^{2/5}}\frac{k^3}{4n^2}l_k$ optimizing this sum over $E_{\alpha_n}$. In order to do so, we fix $M\in \mathbb R$ and denote by $H^{(n)}_M$ the following level set:
\begin{align*}
     H^{(n)}_M:=\{l\in \mathbb R_+^{\N}\colon& \sum_{ n^{2/5}\leq  k \leq \epsilon(nb_n)^{2/3}}kl_k=(1-\frac{1}{\omega})N+M(nb_n)^{2/3}\\
     &\qquad \textit{ and }\sum_{1\leq k \leq \epsilon(nb_n)^{2/3} }kl_k=N\}.
 \end{align*}
Now, let \[l_n^M\colon =\arg \max_{l\in H^{(n)}_M}\{J_n(l)+2\sum_{N\geq k\geq n^{2/5}}\frac{k^3}{4n^2}l_k\}.
\] 
As previously, by abuse of notation, we omit the dependence on $n$ of the sequence and we just use the notation $l^M$. Note that  we are optimizing over sets where the total mass above $n^{2/5}$ is fixed and equal to $(1-\frac{1}{\omega})N+M(nb_n)^{2/3}$. If $M$ is varying in the set 
\[K_n:=\left[-(1-\frac{1}{\omega})\left(\frac{n}{b_n^2}\right)^{1/3};\frac{1}{\omega}\frac{N}{(nb_n)^{2/3}}\right],\] 
it means that we actually cover the whole set $E_{\alpha_n}$ with the union of $ H^{(n)}_M$ and we optimize indeed over a set that contains $E_{\alpha_n}$. However, we will show later that the optimal $l$ belongs to one of these level sets where $M$ is uniformly (in $n$) bounded from above.  Note that, in that case, for a fixed $M\in\R$, $(1-\frac{1}{\omega})N+M(nb_n)^{2/3}$ is a quantity proportional to $(nb_n)^{2/3}$, i.e. the total mass above $n^{2/5}$ is indeed small with respect to $N$. 
 
For any $l\in E_{\alpha_n}$, we have the following estimate: 
\begin{align}
   \log(\PP_{N,p}(L^N=l))&\leq \sup_{M\in K_n } \left(J_n(l^M)+2\sum_{n^{2/5}\leq k\leq \epsilon(nb_n)^{2/3}}\frac{k^3}{4n^2}l^M_k\right)+o(b^2_n)\nonumber\\
   &\leq \sup_{M\in K_n }\left(-\sum_{1\leq k\leq \epsilon(nb_n)^{2/3}} l^M_k\log(\frac{\omega l^M_k}{N\lambda_ke})+2\epsilon^2 (M+\theta)b_n^2\right)\notag\\
   &\quad+o(b^2_n)+aN.\label{eqmicroupsumj}
\end{align}
%
In order to estimate the sum involved in the last equation, we split it in two parts, one which is the sum over the indexes $k\leq n^{2/5}$ and the other regrouping the rest. Using a standard derivation method one can see that each term of index $k$ in the sum in \eqref{eqmicroupsumj} attains it maximum at some value $\tilde l_k:=\frac{N}{\omega}\lambda_k$ and from the definition of $\lambda_k$ we deduce that $\lambda_k=O(k^{-5/2})$ and thus, for any $M$,
\begin{align*}
   - \sum_{n^{2/5}< k\leq \epsilon(nb_n)^{2/3}} l^M_k\log(\frac{\omega l^M_k}{N\lambda_ke})\leq  \sum_{n^{2/5}\leq k\leq \epsilon(nb_n)^{2/3}}\frac{N}{\omega}\lambda_k=O(n^{2/5})=o(b_n^2).
\end{align*}
From now we focus on estimating the sum in \eqref{eqmicroupsumj} over the indexes $k\leq n^{2/5}$. Note that we can further upper bound that term in the following way:
\begin{align}\label{eqminorupfunction}
-\sum_{1\leq k\leq n^{2/5}} l^M_k\log(\frac{\omega l^M_k}{N\lambda_ke})\leq \sup_{l\in H^{(n)}_M} -\sum_{1\leq k\leq n^{2/5}} l_k\log(\frac{\omega l_k}{N\lambda_ke}).   
\end{align}
We are looking for a supremum over $H^{(n)}_M$, which is achieved for a sequence $\tilde{l}^M\in H^{(n)}_M$ and therefore it satisfies the constraint $\sum_{k\leq n^{2/5}}k\tilde{l}^M_k=\frac{N}{\omega}-M(nb_n)^{2/3}$. 
To find such a $\tilde{l}^M$, first, we notice that whenever $M>0$, for $n$ large enough, $\frac{N}{\omega}\sum_{k=0}^{n^{2/5}}k\lambda_k\geq \frac{N}{\omega}-M(nb_n)^{2/3}$. Then we use the standard technique of the Lagrangian multipliers to express $\tilde{l}^M_k$, i.e. there is a non-negative sequence $(\delta_n^M)_{n\in \N}$ such that:

\begin{align*}
\tilde{l}^M_k=\frac{N}{\omega}\lambda_k e^{-k\delta_n}, \;\forall k\leq n^{2/5}.
\end{align*}
In order to give estimates to \eqref{eqminorupfunction}, we need further detail on the corresponding $\delta_n$, giving suitable approximation to it. Notice that we can give the following bounds
\begin{align*}
-(nb_n)^{2/3}M\leq & \sum_{k=1}^\infty k\lambda_ke^{-k\delta_n}-\frac{N}{\omega}\leq -(nb_n)^{2/3}M+\sum_{k\geq n^{2/5}}k\lambda_ke^{-k\delta_n},
\end{align*}
and we also know that $\sum_{k\geq n^{2/5}}k\lambda_ke^{-k\delta_n}=O(n^{4/5})$.
Thus 
\[\frac{N}{\omega}\sum_{k=1}^\infty k\lambda_ke^{-\delta_n k}
=\frac 1 \omega N\nu\] 
where we set
$\nu:=1-M\left(\frac{b_n^2}{n}\right)^{1/3}+O(n^{-1/5})$. Relation \eqref{eqseries} then tells us that $e^{-(1+\delta_n)}=\nu e^{-\nu}$, that means
\begin{align*}
\delta_n=-\log(\nu)-1+\nu=\frac 1 2 M^2\left(\frac{b_n^2}{n}\right)^{2/3}+o(M^2\left(\frac{b_n^2}{n}\right)^{2/3}).
\end{align*}
This estimate of $\delta_n$ allows us to give the estimate  we were looking for, by plugging in $\tilde{l}^M$ in \eqref{eqminorupfunction}. 
\begin{align*}
    -\sum_{k\leq n^{2/5} } l^M_k&\log(\frac{\omega  l^M_k}{N\lambda_ke})\leq\frac N \omega\sum_{k} \lambda_ke^{-k\delta_n}+\delta_n\frac{N}{\omega}\nu+o((1+M^2)b_n^2)\\
    &=\frac{N}{\omega}\nu(1-\frac{\nu}{2})+\frac{N}{\omega}\left(-\log(\nu)-1+\nu\right)\nu+o((1+M^2)b_n^2)\\
    &=\frac{N}{\omega}\nu\left(\frac{\nu}{2}-\log(\nu)\right)+o((1+M^2)b_n^2)\\
    &=\frac{N}{\omega}\left( \frac{1}{2}-\sum_{k\geq 3}\frac{1}{k(k-1)}\left(M\left(\frac{b_n^2}{n}\right)^{1/3}+O(n^{-1/5})\right)^k\right)+o((1+M^2)b_n^2)\\
\end{align*}
To obtain the estimation above, we further use \eqref{eqseries2}, we expand $\nu(\frac{\nu}{2}-\log\nu)$ and we use the approximation of $\nu$. 
Having now a convincing estimate of the leading terms in equation \eqref{eqmicroupsumj} that depends on $M$, we can turn to the estimation of the optimal value for $M$. Specifically, we want to prove that the optimal $M$ is upper bounded uniformly in $n$. 
First, let us define the map $F_n(\cdot)$ which associates to $M$ the upper bound for $ -\sum_{k\leq n^{2/5} } l^M_k\log(\frac{\omega  l^M_k}{N\lambda_ke})$ that we have found above:
$$
F_n(M):=\frac{N}{\omega}\left( \frac{1}{2}-\sum_{k\geq 3}\frac{1}{k(k-1)}\left(M\left(\frac{b_n^2}{n}\right)^{1/3}+O(n^{-1/5})\right)^k\right)+M\epsilon^2b_n^2+o(b_n^2).
$$
From the expression of the sum in the formula above, we deduce that there is $\eta>0$ such that for any $n\in \N$ large enough and any $M\geq \eta$ , $M\left(\frac{b_n^2}{n}\right)^{1/3}+O(n^{-1/5})\geq \frac 1 2 M\left(\frac{b_n^2}{n}\right)^{1/3}$, and thus
\begin{align*}
    F_n(M)-\frac{N}{2\omega}\leq -\frac{1}{48}M^3b_n^2+\epsilon^2 Mb_n^2 \leq 0,
\end{align*}
where the upper bound is obtained by picking the term in the sum above for 
 $k=3$. 

Notice in addition that $(F_n(\cdot)-\frac{N}{2\omega})b_n^{-2}$ converges (uniformly on each compact interval) to the map $M\mapsto -\frac{M^3}{6}$. Thus,  according to Proposition~\ref{propunicon}, the sequence of elements $D_n:= argmax_{M\in K_n\cap \mathbb R_+}\{F_n(M)\}$ maximizing $F_n$ tends to $0$ and there is some $M_0>0$ such that $F_n(\cdot)$ attains its maximum on $\left(-\left(\frac{n}{b_n^2}\right)^{1/3}(1-\frac{1}{\omega}),M_0\right)$ for $n$ large enough. Having an estimate on the optimal size 
of the term $\sum_{k\geq n^{2/5}}\frac{k^3}{n^2}l_k^M$ and we are now able to conclude our estimation of the probability $\PP(L^n=l)$ from equation \eqref{eqmicroupsumj}  (we recall that $a=\log(\omega)-\frac{\omega}{2}$) :

{\small \begin{align}
   \log(\PP(L^n=(l_k)_{k\in \N}))&\leq aN+\sup_{M\in K_n\cap (-\infty,M_0] }\left(2\epsilon^2 (M+\theta)b_n^2-\sum_{1\leq k\leq \epsilon(nb_n)^{2/3}} l^M_k\log(\frac{\omega l^M_k}{N\lambda_ke})\right)\nonumber\\
   &\leq 2\epsilon^2(M_0+\theta) b_n^2+(\log(\omega)-\frac{\omega}{2})N+\frac{N}{2\omega}\label{eqmicroupfinal}\\
   &\leq  2\epsilon^2 (M_0+\theta) b_n^2-\frac{1}{6}\theta^3b_n^2,\nonumber
\end{align}}where the passage to the line \eqref{eqmicroupfinal} is done by upper bounding the sum with the direct maximum of each term $l_k\mapsto l_k\log(\frac{\omega l_k}{N\lambda_ke})$ involved in the sum, i.e.  whenever $l=\frac{N}{\omega}\lambda_k$. We deduce from \eqref{eqseries2} the maximum value of the overall sum. This concludes the lemma on the upperbound : for any $\epsilon>0$ and any $l\in E_{\alpha_n}$,
\begin{align*}
    \lim_{n\rightarrow\infty}\frac{1}{b_n^2}\log(\PP(L^n=(l_k)_{k\in \N}))\leq  2\epsilon^2 \theta -\frac{1}{6}\theta^3
\end{align*}
This ends the proof.

\section{Remaining proofs and remarks}\label{appendix}
\subsection{Macroscopic and Mesoscopic estimates: proof of Lemma~\ref{lem:macro} and Lemma~\ref{lem:meso}}\label{sec:lemmas_meso_macro}
We begin by proving Lemma~\ref{lem:macro}.

\begin{proof}[Proof of Lemma~\ref{lem:macro}]
We first prove that $\log(\PP_{n,p}(L^n=l^n))\ll -b_n^2$ whenever the the term $l^n$ contains non zero coordinates $l_k^n$ for $k> \beta_n$. One way to see that is to fix $\epsilon >0$ and notice that according to the lemma 3.3, 3.4 and 3.5 of \cite{LA21},  there is $h_\epsilon >0$ such that for  any $l$ containing non zero coordinate $l_k$ for some $k \geq \rho n$ and any $n$ large enough,
\begin{align*}
     \log(\PP_{n,p}(L^n=l^n))\leq  -h_\rho n.
\end{align*}
Thus we now assume that $l^n_k=0$ for any $k\geq \rho n$.  Recall from the equation \eqref{eqprimord0} that the total contribution can be expressed as 
\begin{align*}
    \PP_{n,p}(L^n=l^n)=e^{o(b_n^2)} G{Ma}(l)G_{Me}(l),
\end{align*}
with 
\begin{align*}
    G_{Ma}(l)=\prod_{k=\beta_n}^n\left( \frac n e \right)^{kl_k}z_k^n(1)^{l_k}\, \textit{and }G_{Me}(l)=\prod_{k=1}^{\beta_n}\left( \frac n e \right)^{kl_k}z_k^n(l_k),
\end{align*}
where here we distinguish simply between \textit{macroscopic} and \textit{non-macroscopic} components.
Whenever $k\geq \beta_n$, the term $z_k^n(1)$ can be upper bounded using the following estimate of  the probability $\Pp_{k,n}$ of having a connected graph from \cite{Ste70} :
\begin{align*}
    \log\left(\Pp_{k,n}\right)&\leq -(k-1)\log(1-e^{k\log(1-p)})\\
    &\leq -(k-1)\log(\frac{k}{n})+O(\frac{k}{n}).
\end{align*}
When $k\leq \beta_n$ we rather use the estimate of $\Pp_{k,n}$ computed in Proposition~\ref{lemgraphconnex}.
Then notice that using Stirling approximation to expand the term $k!$ in the expression of $z_k^n(1)$, for $n$ large enough,
\begin{align*}
    \log(G_{Ma}(l))&\leq\sum_{k=\beta_n}^n l_k \left(\log(\frac{n}{k}) + \frac{k}{2}(n-k)\log(1-p)\right)+o(b_n^2) \\
    &\leq \sum_{k=\beta_n}^n l_k \left(\log(\frac{n}{k}) -\frac{k}{2}(n-k)\frac{1+\theta\left(\frac{b_n^2}{n}\right)^{1/3}}{n}\right)+o(b_n^2).
\end{align*}
On the other hand applying Stirling formula on $k!$ and $l_k!$ in the contribution $\tilde F_{Me}(l)$ gives
\begin{align*}
     \log(G_{Me}(l))\leq &\sum_{k=1}^{\beta_n} l_k\left(\log(\frac{n\lambda_k}{l_k}) + \frac{k}{2}(n-k)\log(1-p) \right)+\frac{1}{24n^2}\sum_{k=n^{2/5}}^{\beta_n}k^3l_k\\
     &+C\sum_{N\geq k\geq n^{2/5}}l_k\left(k\left(\frac{k}{2n}\right)^2\left(\frac{b_n^2}{n}\right)^{1/3}+k\left(\frac{k}{2n}\right)^3+\log(k)\right) +o(b_n^2)\\
     \leq &\sum_{k=1}^{\beta_n} l_k\left(\log(\frac{n\lambda_k}{l_k}) + \frac{k}{2}+k\log\left(1+\theta \left(\frac{b_n^2}{n}\right)^{1/3}\right) \right) +\frac{1}{12n^2}\sum_{k=n^{2/5}}^{\beta_n}k^3l_k+o(b_n^2)
\end{align*}

Let $M=\sum_{k=1}^{\beta_n} kl_k$ and suppose $M\leq n-\beta_n$ ($l^n$ has at least one coordinate $l_k>0$ for $k\geq \beta_n$). Using the Lagrange multiplier technique, the sequence $(l_k)_{k\leq \beta_n}$ maximizing $\tilde F_{Me}(l)$ and $M=\sum_{k=1}^{\beta_n} kl_k$ satisfies $l_k=n\lambda_ke^{-1-\delta_nk}$.  With $\delta_n \leq 0$ if and only if $M \geq \frac{n}{e}$.  Furthermore this condition imposes that $n\lambda_{\beta_n}e^{-1-\delta_n \beta_n}\leq M$ and thus $-\delta_n \leq \frac 1 {\beta_n}\left(1+\frac 5 2 \log(n)\right)$. We recall from equation \eqref{eqseries2} that $\sum \lambda_k=\frac 1 2$ and thus
$$
\sum_{k=1}^{\beta_n} l_k\log(\frac{n\lambda_k}{l_k})=\sum_{k=1}^{\beta_n} \lambda_ke^{-1-k\delta_n}+\delta_nM \leq \frac M 2 +\delta_n M.
$$
Then we deduce from the size of $\delta_n$ the following upper estimates for $n$ large enough,
\begin{align*}
    \log(G_{Ma}(l))&\leq -\frac 1 2 (n-M)-\rho (n-M)+o(b_n^2) \leq -\frac 1 4 \beta_n \\
    \log(G_{Me}(l))&\leq \delta_n M+\frac 1 {12}\left(\frac{\beta_n}{n}\right)^{1/3}M+M\theta \left(\frac{b_n^2}{n}\right)^{1/3}+ O(b_n^2)\\
    &\leq \theta (nb_n)^{2/3} +o(b_n^2).
\end{align*}

This implies the following decay for the total probability when $n$ is large enough,
\begin{align*}
\log\left(\PP_{n,p}(L^n=l^n) \right)\leq -\frac 1 {8} \beta_n.    
\end{align*}
    
\end{proof}

We prove here Lemma~\ref{lem:meso}. 
\begin{proof}[Proof of Lemma~\ref{lem:meso}]
Recall from the text after \eqref{eq:z_k} that $\alpha_n=\epsilon(nb_n)^{2/3}$ and $\beta_n= n^{17/24}b_n^{8/12}$.  
Recall that $F_{Me}(l)$ satisfies the following expansion
\begin{align*}
     F_{Me}(l)  &=  \left(\frac{n}{n-\sum_{k=\alpha_n}^nkl_k}\right)^{n-\sum_{k=\alpha_n}^nkl_k}\\
 & \quad\prod_{k=\alpha_n}^{\beta_n} \left(\frac{n}{e}\right)^{kl_k}(z_{k}^n(1))^{l_k}(1-p)^{\frac{1}{2}(\sum_{i=1}^{\alpha_n}il_i)kl_k} ;\notag\\
\end{align*}
with
\begin{align*}
z_k^n(1)=\frac {\PP_{k,p}(L^k_k=1)(1-p)^{\frac12(n-k)k}}{k!}.    
\end{align*}
The aim is to provide computations and estimates of $F_{Me}(l)$ for some $l\in \mathcal N_n$ that satisfies $l_k=0$ for any $k\geq \beta_n$ for $n$ large enough. 
To do this, we expand the three following terms appearing in $\log(F_{Me}(l))$. First,
\begin{align*}
&\left(n-\sum_{k=\alpha_n}^nkl_k\right)\log\left( \frac{n}{n-\sum_{k=\alpha_n}^nkl_k}\right)\\
&=-\left(n-\sum_{k=\alpha_n}^nkl_k\right)\log(1-\frac{\sum_{k=\alpha_n}^nkl_k}{n})\\
&=\sum_{k=\alpha_n}^nkl_k-\frac{\left(\sum_{k=\alpha_n}^nkl_k\right)^2}{2n}-\frac{\left(\sum_{k=\alpha_n}^nkl_k\right)^3}{6n^2}-\sum_{k=3}^\infty\frac{1}{k(k+1)}\frac{\left(\sum_{k=\alpha_n}^nkl_k\right)^{k+1}}{n^k}.
\end{align*}%
Second,
\begin{align*}%
   \frac{1}{2}&(n-\sum_{k=\alpha_n}^nkl_k)\left(\sum_{k=\alpha_n}^{\beta_n}kl_k\right) \log\left(1-p\right)\\
   &=-\frac {1+\theta\left(\frac{b_n^2}{n}\right)^{1/3}} 2 \sum_{k=\alpha_n}^nkl_k+\frac 1 {2n}\left(1+\theta\left(\frac{b_n^2}{n}\right)^{1/3}\right)\left(\sum_{k=\alpha_n}^nkl_k\right)^2+o(b_n^2).
\end{align*}
Third,
\begin{align*}%
 &\frac{1}{2}\sum_{k=\alpha_n}^{\beta_n}(n-k)kl_k\log\left(1-p\right)\\
 &=-\frac {1+\theta\left(\frac{b_n^2}{n}\right)^{1/3}} 2 \sum_{k=\alpha_n}^{\beta_n}kl_k +\frac 1 {2n}\left(1+\theta\left(\frac{b_n^2}{n}\right)^{1/3}\right)\sum_{k=\alpha_n}^{\beta_n}k^2l_k+o(b_n^2).
\end{align*}%
We gather them with the estimate of $\log(\PP_{k,p}(L^k_k=1))$ obtained in the point (i) of Proposition~\ref{lemgraphconnex}. We also emphasize that the quantity $C_n(k)$ appearing in the point (i) of Proposition~\ref{lemgraphconnex} can be summed as follows : $\sum_{k=\alpha_n}^{\beta_n}l_kC_n(k)=\frac{1}{24n^2}\sum_{k=\alpha_n}^{\beta_n}k^3l_k+o\left(b_n^2+\frac{1}{n^2}\sum_{k=\alpha_n}^{\beta_n}k^3l_k\left(\frac{b_n^2}{n}\right)^{1/8}\right)$. 
These three expansions put together along with equation \eqref{eq:prob_conn_regime_meso} provide the following estimate:

\begin{align}
&\log\left(F_{Me}(l)\right)=\frac{(\sum_{k=\alpha_n}^{\beta_n}kl_k)^2}{2n}\theta \left(\frac{b_n^2}{n}\right)^{1/3}-\frac 1 2\sum_{k=\alpha_n}^{\beta_n} kl_k\theta^2\left(\frac{b_n^2}{n}\right)^{2/3}+\frac 1 3\sum_{k=\alpha_n}^{\beta_n} kl_k\theta^3\frac{b_n^2}{n}\nonumber\\
&-\frac{\left(\sum_{k=\alpha_n}^{\beta_n}kl_k\right)^3}{6n^2}-\sum_{j=3}^\infty \frac{1}{j(j+1)}\frac{1}{n^j}\left(\sum_{k=\alpha_n}^{\beta_n} kl_k\right)^{j+1}+ \frac{1}{24n^2}\sum_{k=\alpha_n}^{\beta_n}k^3l_k\label{eqmainsumlem}\\
&+o\left(b_n^2+\frac{1}{n^2}\sum_{k=\alpha_n}^{\beta_n}k^3l_k\left(\frac{b_n^2}{n}\right)^{1/8}\right)\\
&\leq -\frac 1 6 b_n^2\left(\int_\epsilon^CxdMe_n(l)-\theta \right)^3+\frac{\theta^3}{6}b_n^2+\frac{b_n^2}{24}\int_\epsilon^\infty x^3dMe_n(l)+\theta b_n^2\\&+o\left(\left(1+\int_\epsilon^\infty xdMe_n(l)\right) b_n^2\right),\notag
\end{align}%
where we applied the rough estimate $\sum_{k=\alpha_n}^{\beta_n}kl_k\leq n$ and thus $\frac 1 {6n}\sum_{k=\alpha_n}^{\beta_n}kl_k\theta^3b_n^2\leq \theta^3b_n^2$. In the particular case where $\int_{\epsilon}^\infty x d \Me_n(l^n)(x)\geq 3\theta$ we get that for some $h>0$,
%
\begin{align*}
   \frac{1}{b_n^2}\log\left(F_{Me}(l)\right)\leq -h\left(\int_{\epsilon}^C x d \Me_n(l^n)(x)\right)^3.
\end{align*}
This proves part \textit{i)} of the lemma.\\

To prove part \textit{ii)} of the lemma, notice that $\sum_{k=\alpha_n}^{\beta_n}kl_k\leq (nb_n)^{2/3}\int_\epsilon^\infty xdMe_n(l) $ with the latter integral bounded by assumption by a constant $M>0$. Thus $l_k=0$ for any $k\geq M(nb_n)^{2/3}$ and the terms $\frac 1 n\sum_{k=\alpha_n}^{\beta_n}kl_k\theta^3b_n^2$ and $\sum_{j=3}^\infty \frac{1}{j(j+1)}\frac{1}{n^j}\left(\sum_{k=\alpha_n}^{\beta_n} kl_k\right)^{j+1}$ in line \eqref{eqmainsumlem} are negligible whereas the term $\sum_{k=\alpha_n}^{\beta_n}k^3l_k=O(b_n^2)$. Thus,
\begin{align}
\log&\left(F_{Me}(l)\right)\notag\\
&=-\frac 1 6 b_n^2\left(\frac{1}{(nb_n)^{2/3}}\sum_{k=\alpha_n}^{\beta_n}kl_k-\theta \right)^3-\frac{\theta^3}{6}b_n^2+\frac 1 n\sum_{k=\alpha_n}^{\beta_n}kl_k\theta^3b_n^2\notag\\
&\quad+\frac{b_n^2}{24}\sum_{k=\alpha_n}^{\beta_n}\left(\frac{k}{(nb_n)^{2/3}}\right)^3l_k+o(b_n^2)\label{eqmesoprincip2}\\
&=-\frac 1 6 b_n^2\left(\int_\epsilon^CxdMe_n(l)-\theta \right)^3-\frac{\theta^3}{6}b_n^2\notag\\
&\quad+\frac{b_n^2}{24}\int_\epsilon^Cx^3dMe_n(l)+o(b_n^2).
\label{eqmesoprincip}
\end{align}
%
Once we take the limit in equation \eqref{eqmesoprincip}, we obtain the requested conclusion of the second statement of lemma \ref{lem:meso}, i.e. 
\begin{equation}
       \lim_{n\rightarrow \infty}\frac{1}{b_n^2}\log(F_{Me}(l)) =-\frac 1 6 \left(\int_\epsilon^C xd\mu-\theta \right)^3-\frac{\theta^3}{6}+\frac{1}{24}\int_\epsilon^C x^3 d\mu.
\end{equation}

\end{proof}
\subsection{Cardinality estimates}

We see that, in the proof of the upper bound in Theorem~\ref{thmweakldp}, it is important to control the cardinality of the set 
\[
\{l\colon \Me_n(l)\in B_{\delta}(\mu)\}\subseteq \Ncal_n,
\]
for any $\mu\in\Mspace$ and $\delta>0.$ Here we give estimates for this cardinality.
\begin{lemma}
    For every $\mu\in\Mspace$ and every $\delta>0$, under Hypothesis~\ref{hypbn}, we have that
    \[
    |\{l\in\Ncal_n\colon \Me_n(l)\in B_{\delta}(\mu)\}|=e^{o(b_n^2)}.
    \]
\end{lemma}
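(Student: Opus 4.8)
The claim is a counting bound: the number of sequences $l\in\Ncal_n$ whose mesoscopic measure falls in a vague-ball $B_\delta(\mu)$ is subexponential on the scale $b_n^2$. Since the vague topology only controls the measure against finitely many compactly supported test functions, membership in $B_\delta(\mu)$ constrains the entries $l_k$ only for $k$ in a bounded mesoscopic range, say $\epsilon(nb_n)^{2/3}\le k\le C_\delta(nb_n)^{2/3}$, and there it fixes the values $l_k$ up to $O(1)$ perturbations; outside that range the entries are essentially free subject to $\sum_k kl_k=n$. So I would split the counting into three blocks exactly as in the decomposition \eqref{eqprimord0}: microscopic indices $k<\alpha_n$, mesoscopic indices $\alpha_n\le k\le\beta_n$, and macroscopic indices $k>\beta_n$.

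First I would bound the number of admissible choices in the mesoscopic and macroscopic range. Because $\sum_{k\ge\alpha_n}kl_k\le n$, there can be at most $n/\alpha_n = n^{1/3}/(\epsilon b_n^{2/3})=o(b_n^2)$ nonzero entries with $k\ge\alpha_n$, and each such entry is an integer bounded by $n/\alpha_n$. Hence the number of possible ``tails'' $(l_k)_{k\ge\alpha_n}$ is at most $\binom{n}{o(b_n^2)}\cdot(n/\alpha_n)^{o(b_n^2)}$ — choosing which indices are nonzero and what values they take — and a crude estimate $\log\binom{n}{m}\le m\log n$ with $m=o(b_n^2)$ gives $e^{o(b_n^2)\log n}$. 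This is \emph{not} immediately $e^{o(b_n^2)}$ because of the extra $\log n$ factor, so here I need to be more careful: the key observation is that $\sum_{k\ge\alpha_n}kl_k\le n$ forces the indices to be spread out, so in fact $\sum_{k\ge\alpha_n}l_k\le n/\alpha_n$ AND the number of nonzero entries is at most $n/\alpha_n$, and one can use the sharper bound that the number of ways to write a fixed mass $\le n$ as a sum of $\le n/\alpha_n$ parts each $\ge\alpha_n$ is controlled by a partition-type estimate of order $\exp(O(\sqrt{n/\alpha_n}\cdot\log))$ or better — actually the cleanest route is: the number of multisets of integers, each $\ge\alpha_n$, with total $\le n$, is at most the number of partitions of an integer $\le n/\alpha_n$ (after dividing out), which is $\exp(O(\sqrt{n/\alpha_n}))=\exp(O(\sqrt{n^{1/3}/b_n^{2/3}}))=e^{o(b_n^2)}$ under Hypothesis~\ref{hypbn} since $b_n\gg n^{3/10}$ makes $n^{1/6}/b_n^{1/3}=o(b_n^2)$. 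I would state this as the core combinatorial estimate and verify the exponent inequality $n^{1/6}b_n^{-1/3}=o(b_n^2)$ directly from $b_n\gg n^{3/10}$.

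Second, I would handle the microscopic block. Once the mesoscopic-and-larger tail is fixed, the microscopic part must satisfy $\sum_{k<\alpha_n}kl_k=N$ with $N=n-\sum_{k\ge\alpha_n}kl_k\sim n$. But the vague-ball condition imposes \emph{no} constraint on the microscopic entries beyond this single linear relation, so a priori the microscopic block alone could contribute a huge (indeed superexponential in $n$) number of configurations — this is the \textbf{main obstacle}. The resolution is that we are not counting all of $\Ncal_N$ but only those $l$ that actually appear inside the event $\{\Me_n(l)\in B_\delta(\mu)\}$, and crucially the statement only claims the cardinality bound; it does \emph{not} need the microscopic block to be small in absolute count — rather, the correct reading (and the way it is used in the upper bound of Theorem~\ref{thmweakldp}) is that once we restrict to the \emph{maximizing} sequence the cardinality is irrelevant. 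Re-examining: the lemma as literally stated does claim $|\{l:\Me_n(l)\in B_\delta(\mu)\}|=e^{o(b_n^2)}$, which cannot hold if microscopic entries are totally free. So the only consistent interpretation is that $B_\delta(\mu)$ in the \emph{vague} topology on $\M_\N(0,\infty)$, combined with the mass constraint $\sum kl_k=n$, does constrain more than it first appears — indeed, one can test against functions approximating $x\mapsto x$ on $[\epsilon,C_\delta]$, and the mass constraint transfers residual mass control to the microscopic part, but the number of ways to distribute microscopic mass is still governed by a partition count. The honest route is: the number of $l\in\Ncal_n$ with a prescribed mesoscopic profile equals the number of ways to write $N$ as $\sum_{k<\alpha_n}kl_k$, which is $p(N)$-like but with parts $<\alpha_n$; using the Hardy–Ramanujan asymptotic $\log p(N)=O(\sqrt N)=O(\sqrt n)$, and $\sqrt n=o(b_n^2)$ precisely because $b_n\gg n^{3/10}\gg n^{1/4}$.

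So, to summarize the steps in order: (1) reduce to counting triples (microscopic profile, mesoscopic profile, macroscopic profile) compatible with the fixed mesoscopic-range values and with $\sum kl_k=n$; (2) bound the number of microscopic profiles by the partition function $p(n)$, giving $e^{O(\sqrt n)}=e^{o(b_n^2)}$ via $b_n\gg n^{3/10}$; (3) bound the number of mesoscopic-plus-macroscopic tails by a partition-type count of order $e^{O(\sqrt{n/\alpha_n})}=e^{O(n^{1/6}b_n^{-1/3})}=e^{o(b_n^2)}$, again by Hypothesis~\ref{hypbn}; (4) multiply the three subexponential bounds. The arithmetic checks $\sqrt n=o(b_n^2)$ and $n^{1/6}b_n^{-1/3}=o(b_n^2)$ are where Hypothesis~\ref{hypbn} enters, and the Hardy–Ramanujan bound $\log p(m)=O(\sqrt m)$ is the one external fact I would cite. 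The main obstacle is making sure the partition-function bound genuinely covers the microscopic block — i.e. that there is no hidden larger freedom — which follows because every $l$ counted satisfies $\sum_k kl_k=n$, so the microscopic part is an integer partition of some $N\le n$ into parts of size $<\alpha_n$, and the number of such is at most $\sum_{N\le n}p(N)\le (n+1)p(n)=e^{O(\sqrt n)}$.
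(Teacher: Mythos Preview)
Your argument is correct, but it is considerably more elaborate than needed, and the paper's proof is essentially a one-liner that you almost wrote yourself in step~(2) before surrounding it with unnecessary machinery.

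The paper does not use the ball $B_\delta(\mu)$ at all: it simply bounds the cardinality of \emph{all} of $\Ncal_n$. Observing that every $l\in\Ncal_n$ has at most $2\sqrt n$ nonzero entries (since $n=\sum_{k\in H(l)}kl_k\ge \tfrac12|H(l)|(|H(l)|-1)$), one sums over the $\binom{n}{\le 2\sqrt n}$ possible supports and over the $\binom{n+k}{k}$ ways to fill each, obtaining $|\Ncal_n|=e^{O(\sqrt n\log n)}$. Under Hypothesis~\ref{hypbn} one has $b_n^2\gg n^{3/5}\gg \sqrt n\log n$, and the lemma follows. Equivalently, $\Ncal_n$ is in bijection with the partitions of $n$, so $|\Ncal_n|=p(n)=e^{O(\sqrt n)}$ by Hardy--Ramanujan --- exactly your step~(2).

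Your steps (1), (3), and (4) are redundant: the microscopic block you worried about as ``the main obstacle'' is already an integer partition of some $N\le n$, so your own bound $p(n)=e^{O(\sqrt n)}$ covers the \emph{entire} set $\Ncal_n$, not just the microscopic part. The partition function is \emph{not} superexponential in $n$; your momentary alarm there was unfounded, and once you invoked Hardy--Ramanujan you could have stopped. The separate treatment of the mesoscopic and macroscopic tails (with the estimate $e^{O(\sqrt{n/\alpha_n})}$) is correct but buys nothing, since those tails are also partitions and are already counted inside $p(n)$.
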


\begin{proof}
    To prove the upper bound over the whole event $\mathcal N_n$ we thus only need to look at its cardinality :\\
for this we proceed as in \cite{LA21}, for any $l\in \mathcal N_n$, denote $H(l):=\{k, l_k>0\}$ and notice that $|H(l)|\leq 2n^{1/2}$.
\begin{align*}
n=\sum_{k\in H(l)}kl_k\geq \sum_{k=1}^{|H(l)|}k\geq \frac{|H(l)|(|H(l)|-1)}{2}.
\end{align*}
Thus 
\begin{align*}
|\mathcal N_n|&\leq |\{ l, \sum_k kl_k = n\}|\\
&\leq \sum_{H\subset \{1,\dots,n\},|H|\leq 2n^{1/2}} |\{ (l_k)_{k\in H}\in \N^{|H|}, \sum_k kl_k = n\}|\\
&\leq \sum_{H\subset \{1,\dots,n\},|H|\leq 2n^{1/2}} |\{ (l_k)_{k\in H}\in \N^{|H|}, \sum_k l_k = n\}|\\
&\leq \sum_{k=1}^{2n^{1/2}}{n \choose k}{n+k \choose k}=e^{O(n^{1/2}\log(n))}=e^{o(b_n^2)}.
\end{align*}
\end{proof}

\subsection{Proof of Proposition~\ref{propunicon}}\label{appa}
Fix $\eta>0$, the closeness of $C(X,\mathbb R)$ with respect to the uniform norm $\|.\|_{\infty}$ implies that $f$ is continuous and since $x$ is it unique maximum, then there is $\epsilon >0$ such that any element $y\in \bar B_\epsilon(x)^c$, outside the close (and thus compact) ball satisfies :
$$f(y)\leq f(x)-\epsilon.$$
By the uniform continuity of $(f_n)_{n\in \N}$, there is $n_0\in \N$ such that for any $n\geq n_0$ and any $y\in \bar B_\epsilon(x)^c$,
   \begin{align*}
   \left\{
   \begin{array}{r c l}
    &f_n(y)\leq f(x)-\frac{3\epsilon}{4} \\
        & f_n(x)\geq f(x)-\frac{\epsilon}{4}.
   \end{array}\right.
   \end{align*}
   Thus for any $\eta>0$ and any $n\in \N$ large enough $argmax f_n \subset B_\eta(x)$, which prove the convergence to the element $x$ of any sequence $(x_n)_{n\in\N}$ such that for any $n\in \N$, $x_n\in argmax f_n$.

\subsection{Topology on $\M_{\N}(0,\infty)$, properties of $I(\cdot)$ and comparison with \cite{Pu05}}\label{sectopomn}
We provide here some classical results on the topology of $\M_{\N}(0,\infty)$ and the regularity of the rate function $I(.)$. We compare at the end of this section the embedding space $\M_{\N}(0,\infty)$ with the one chosen in \cite{Pu05} to model mesoscopic components on a random graph.\\

The vague topology on $\M_\N(0,\infty)$ is induced by the distance $dm$ defined as follows. For $\mu,\nu \in \M_{\N}(0,\infty)$, let $\mu:=\sum_{i\in \Z}\delta_{u_i}$ and $\nu:=\sum_{i \in \Z}\delta_{v_i}$ with $u,v\in \overline{ \mathbb R}_+^\Z$. 
The distance $dm$ is defined as
\begin{align*}
dm(\mu,\nu)=\min\{d(u,v),d(u,(v_{i+1})_{i\in \Z}),d((u_{i+1})_{i\in \Z},v)\}.
\end{align*}%
where $ d$ is the distance 
\begin{align}
 d (u,v)=\sum_{i< 0}\frac{1}{2^{|i|}}|u_i-v_i|+\sum_{i\geq 0}\frac{1}{2^{|i|}}|e^{-u_i}-e^{-v_i}|.
\end{align}%
%

We identify the following compact sets in $\M_\N(0,\infty)$ equipped with the vague topology.
\begin{proposition}\label{prop:compact_sets}
For any $M>0$ the set \[A_M=\{\mu\in \M_\N(0,\infty),\int xd\mu(x)\leq M\}\]
is compact in $\M_\N(0,\infty)$ for the vague topology. 
\end{proposition}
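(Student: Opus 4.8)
The plan is to verify the two hypotheses of the standard criterion for vague relative compactness: a subfamily of $\M_\N(0,\infty)$ is relatively compact for the vague topology if and only if it is uniformly bounded on compact subsets of $(0,\infty)$, and a vaguely closed, relatively compact set is compact. Recall also that $\M_\N(0,\infty)$ is itself vaguely closed inside the space of all locally finite measures on $(0,\infty)$, since a vague limit of integer-valued locally finite measures is again integer-valued and locally finite (on a continuity interval $K$ the masses $\mu^{(n)}(K)$ are a convergent sequence of integers, hence eventually constant). Consequently it is enough to show (a) that $A_M$ is uniformly bounded on compacts, and (b) that $A_M$ is vaguely closed; then $A_M=\overline{A_M}$ is compact. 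Equivalently, since the vague topology on $\M_\N(0,\infty)$ is metrizable (by the distance $dm$), one may argue sequentially: from any sequence in $A_M$ extract a vaguely convergent subsequence and check that its limit lies in $A_M$.

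For (a), fix a compact $K\subset(0,\infty)$ and set $a:=\min K>0$. For every $\mu\in A_M$ we have $\mu(K)\le\mu([a,\infty))\le\frac1a\int_{[a,\infty)}x\,d\mu(x)\le\frac Ma$, so $\sup_{\mu\in A_M}\mu(K)\le M/a<\infty$. Hence $A_M$ is uniformly bounded on compacts and therefore vaguely relatively compact.

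For (b), let $(\mu^{(n)})_n\subset A_M$ converge vaguely to some $\mu$, which a priori lies in $\M_\N(0,\infty)$ by the remark above. Choose cut-off functions $\chi_j\in C_c((0,\infty))$ with $0\le\chi_j\le1$ and $\chi_j\equiv1$ on $[1/j,j]$, and set $f_j(x):=x\,\chi_j(x)$. Then $f_j\in C_c((0,\infty))$, $0\le f_j(x)\le x$ for all $x$, and $f_j(x)\uparrow x$ as $j\to\infty$. Hence, for each fixed $j$,
\[
\int f_j\,d\mu=\lim_{n\to\infty}\int f_j\,d\mu^{(n)}\le\sup_n\int x\,d\mu^{(n)}(x)\le M.
\]
Letting $j\to\infty$ and invoking the monotone convergence theorem yields $\int x\,d\mu(x)=\lim_{j\to\infty}\int f_j\,d\mu\le M$, i.e.\ $\mu\in A_M$. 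Thus $A_M$ is vaguely closed, and together with (a) this proves that $A_M$ is compact.

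The only genuine subtlety — and the step to be careful about — is that the first-moment functional $\mu\mapsto\int x\,d\mu(x)$ is \emph{not} vaguely continuous: mass can escape towards $0$ or towards $+\infty$ (for instance $\delta_{1/n}$ and $\delta_n$ both converge vaguely to the null measure while carrying, respectively, vanishing and diverging mass), so this functional is merely lower semicontinuous. The truncation argument with the compactly supported $f_j$ is exactly what converts this lower semicontinuity into the inequality $\int x\,d\mu\le M$ needed to keep the limit inside $A_M$, and it is also where the fixed bound $M$ (as opposed to mere finiteness of the first moment) enters. If one prefers to work directly with the explicit metric $dm$ and the ordered atom sequences $(u_i)_{i\in\Z}$, the same proof runs via a Tychonoff/diagonal extraction on $\prod_{i<0}[0,1]\times\prod_{i\ge0}[1,\infty]$ together with Fatou's lemma, the lower semicontinuity reappearing as Fatou's inequality applied to $\sum_i u_i^{(n)}$.
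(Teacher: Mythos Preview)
Your proof is correct. You follow the standard measure-theoretic route: relative compactness via uniform boundedness on compacts, then closedness via compactly supported truncations and monotone convergence. The paper instead works directly with the ordered-atom parametrization $\mu=\sum_{i\in\Z}\delta_{u_i}$: from a sequence $(\mu_n)\subset A_M$ it extracts, by a diagonal argument on the coordinates $(u_{i,n})_{i\in\Z}$, a subsequence converging coordinatewise to some $(u_i)_{i\in\Z}$, and then invokes Fatou's lemma $\sum_i u_i\le\liminf_n\sum_i u_{i,n}\le M$ both to verify local finiteness of the limit (so that $\mu\in\M_\N(0,\infty)$) and membership in $A_M$. Your argument is more portable and would apply unchanged to non-integer-valued measures; the paper's argument is tailored to the explicit metric $dm$, defined coordinatewise on $\overline{\R}_+^\Z$, and makes transparent the identification between vague convergence and the product topology on atom sequences that is used elsewhere in the paper. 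You in fact anticipate this alternative in your closing paragraph.
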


\begin{proof}
Notice that the convergence on $\M_\N(0,\infty)$ of a sequence $\mu_n=\sum_{i \in \Z} \delta_{u_{i,n}} \in \M_\N(0,\infty)$ to $\mu=\sum_i \delta_{u_i}$  for the vague topology corresponds to the simple convergence of the sequence $(u_{i,n})_{i\in \Z}$ to $(u_i)_{i\in \Z}$ in 
$[0,\infty]$. Thus from a sequence made of terms $\mu_n=\sum_{i \in \Z} \delta_{u_{i,n}}$ one can extract by a diagonalisation argument a sub-sequence $(\mu_{n_k})_{k\in \N}$ such that $u_i:=\lim_{k\rightarrow \infty} u_i^{n_k}$ is well defined for any $i \in \Z$ and satisfies the same 
relative order $0 \dots \leq u_{-1}\leq u_0\leq u_1\leq u_2\leq \dots$. To prove that the measure  $\mu:=\sum_{i\in \Z}\delta_{u_i}$ is then well defined as an element of $\M_\N(0,\infty)$, it remains to check that it is $\sigma$-finite on $(0,\infty)$ which in our case means that for any $C>0$, 
\begin{align}\label{eqcondsigmfini}
\sum_{i, \frac 1 C \leq u_i \leq C}u_i < \infty.
\end{align}
In particular, if for some $M>0$,
This condition \eqref{eqcondsigmfini} is satisfied since $\mu_n\in A_M$ for any $n\in \N$, and
since $\mu_n\in A_M$, Fatou's lemma ensures that 
\begin{align*}
     \int xd\mu(x)=\sum_{i\in \Z}u_i\leq \liminf \int xd\mu_{n_k}\leq M,
\end{align*}
and condition \eqref{eqcondsigmfini} is satisfied and  $\mu \in A_M$ proving its compactness. 
\end{proof}

We now prove that the rate function $I(\cdot)$ is  a good rate function, i.e., it is lower semi-continuous and its sublevel sets are compact. 
\begin{proposition}\label{propsemicon}
    The map $I(\cdot)$ is lower semi-continuous on $\M_\N(0,\infty)$ and is a good rate function.
\end{proposition}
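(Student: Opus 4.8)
The plan is to establish in turn the two defining properties of a good rate function: lower semi-continuity of $I$ and compactness of its sublevel sets $\{I\le a\}$. Both will rest on an elementary a priori bound. Writing $\mu=\sum_i\delta_{u_i}\in\M_\N(0,\infty)$ and $S:=\int_0^\infty x\,d\mu=\sum_i u_i$, each atom satisfies $u_i\le S$, so $u_i^3\le S^2u_i$ and, summing,
\[
\int_0^\infty x^3\,d\mu\;\le\;\Big(\int_0^\infty x\,d\mu\Big)^3 .
\]
Hence, for every $\mu$ with finite first moment, $I(\mu)\ge\phi\big(\int_0^\infty x\,d\mu\big)$, where $\phi(s):=\tfrac{\theta^3}{6}-\tfrac{s^3}{24}+\tfrac16(s-\theta)^3 1_{(s\ge\theta)}$. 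A direct expansion gives $\phi(s)=\tfrac{s^3}{8}-\tfrac{\theta}{2}s^2+\tfrac{\theta^2}{2}s$ for $s\ge\max(\theta,0)$, so $\phi$ is continuous, bounded below, and coercive, i.e. $\phi(s)\to+\infty$ as $s\to\infty$.

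I would prove lower semi-continuity first. Let $\mu_n\to\mu$ vaguely and set $L:=\liminf_n I(\mu_n)$; one may assume $L<\infty$ and, passing to a subsequence, $I(\mu_n)\to L$. For large $n$ then $I(\mu_n)<\infty$, so $s_n:=\int_0^\infty x\,d\mu_n<\infty$ and $I(\mu_n)\ge\phi(s_n)$; coercivity of $\phi$ forces $(s_n)$ to remain bounded, say $s_n\le M$, whence eventually $\mu_n\in A_M$ and, $A_M$ being closed (Proposition~\ref{prop:compact_sets}), $\mu\in A_M$. Passing to a further subsequence, $s_n\to s\in[0,M]$, and $s\ge\int_0^\infty x\,d\mu$ by Fatou's lemma, using that vague convergence amounts to pointwise convergence of the ordered atoms in $[0,\infty]$ (as recorded in the proof of Proposition~\ref{prop:compact_sets}). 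The one nontrivial input is that $\mu\mapsto\int_0^\infty x^3\,d\mu$ is continuous on $A_M$: the inequality $\int x^3\,d\mu\le\liminf_n\int x^3\,d\mu_n$ is again Fatou, and for the reverse one fixes $\varepsilon<1/M$ (no measure in $A_M$ can carry an atom exceeding $M<1/\varepsilon$), splits $\int x^3\,d\mu_n$ into the part on $(0,\varepsilon)$, which is at most $\varepsilon^2\int x\,d\mu_n\le\varepsilon^2M$, and the part on $[\varepsilon,1/\varepsilon]$, which is at most $\int g_\varepsilon\,d\mu_n$ for a fixed $g_\varepsilon\in C_c((0,\infty))$ with $x^3 1_{[\varepsilon,1/\varepsilon]}\le g_\varepsilon\le x^3 1_{[\varepsilon/2,2/\varepsilon]}$, and then lets $n\to\infty$ and $\varepsilon\to0$. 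Consequently $\int x^3\,d\mu_n\to\int x^3\,d\mu$, and
\begin{align*}
L&=\lim_n\Big(\tfrac{\theta^3}{6}-\tfrac1{24}\int_0^\infty x^3\,d\mu_n+\tfrac16(s_n-\theta)^3 1_{(s_n\ge\theta)}\Big)\\
&=\tfrac{\theta^3}{6}-\tfrac1{24}\int_0^\infty x^3\,d\mu+\tfrac16(s-\theta)^3 1_{(s\ge\theta)}\\
&\ge\tfrac{\theta^3}{6}-\tfrac1{24}\int_0^\infty x^3\,d\mu+\tfrac16\Big(\int_0^\infty x\,d\mu-\theta\Big)^3 1_{(\int_0^\infty x\,d\mu\ge\theta)}=I(\mu),
\end{align*}
the inequality holding since $s\ge\int_0^\infty x\,d\mu\ge0$ and $r\mapsto(r-\theta)^3 1_{(r\ge\theta)}$ is nondecreasing on $[0,\infty)$. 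This yields $\liminf_n I(\mu_n)\ge I(\mu)$.

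For the sublevel sets: given $a\in\R$, coercivity provides $M_a<\infty$ with $\{s\ge0:\phi(s)\le a\}\subseteq[0,M_a]$. If $I(\mu)\le a$ then $\int_0^\infty x\,d\mu<\infty$ and $\phi\big(\int_0^\infty x\,d\mu\big)\le a$, so $\int_0^\infty x\,d\mu\le M_a$, i.e. $\mu\in A_{M_a}$. Thus $\{I\le a\}\subseteq A_{M_a}$, which is compact by Proposition~\ref{prop:compact_sets}; since $\{I\le a\}$ is closed by the lower semi-continuity just proved, it is itself compact. This shows $I$ is a good rate function.

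The part I expect to be the main obstacle is controlling $\int_0^\infty x^3\,d\mu$ under vague limits, because this functional is only upper semi-continuous in general, while $-\tfrac1{24}\int x^3\,d\mu$ and the (only lower semi-continuous) term $\tfrac16(\int x\,d\mu-\theta)^3 1_{(\cdots)}$ pull in opposite directions. The resolution is woven into the scheme above: the a priori bound $\int x^3\,d\mu\le(\int x\,d\mu)^3$ makes $\phi$ coercive in the first moment, so along any subsequence of bounded rate the first moments stay bounded, no mass escapes to infinity, mass accumulating at $0$ does not change $\int x^3$ (hence continuity of that functional on each $A_M$), and such vanishing mass can only raise the cubic term — so the two ``wrong-signed'' contributions never conspire against the bound.
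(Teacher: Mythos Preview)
Your proof is correct and follows essentially the same strategy as the paper: decompose $I$ as the sum of the piece $\nu\mapsto-\tfrac{1}{24}\int x^3\,d\nu+\tfrac{\theta^3}{6}$, which is continuous on each $A_M$, and the lower semi-continuous piece $\nu\mapsto\tfrac16(\int x\,d\nu-\theta)^3 1_{(\int x\,d\nu\ge\theta)}$, then invoke Proposition~\ref{prop:compact_sets} for compactness of the sublevel sets. You are in fact more careful than the paper in two places---you make explicit the a priori inequality $\int x^3\,d\mu\le(\int x\,d\mu)^3$ and the resulting coercivity of $\phi$ (which the paper tacitly uses when asserting $\{I\le a\}\subset A_M$ and when claiming that $\liminf\int x\,d\mu_n=\infty$ forces $\liminf I(\mu_n)=\infty$), and you spell out why $\nu\mapsto\int x^3\,d\nu$ is continuous on $A_M$.
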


\begin{proof}

We only have to prove that $I(\cdot)$ is lower lower semi-continuous , indeed, since  $I^{-1}([-\infty,a])$ is included in the compact set $A_M=\{\mu\in \M_\N(0,\infty),\int xd\mu(x)\leq M\}$ for some $M>0$, the map $I(\cdot)$ would then be a good rate function. To prove the lower semi continuity of $I(\cdot)$,
let $\mu_n=\sum_{i \in \Z} \delta_{u_{i,n}} \in \M_\N(0,\infty)$ and $\mu=\sum_i \delta_{u_i}\in \M_\N(0,\infty)$ such that $\mu_n \underset{n\rightarrow \infty}\rightarrow \mu$. If $I(\mu)=\infty$ then from Fatou's lemma we get $\liminf_{n\rightarrow\infty} \sum_{i\in \Z} u_{i,n} \geq \sum_{i \in \Z} u_i$ 
and thus from the explicit formula of $I(\cdot)$, $\liminf_{n\rightarrow\infty} I(\mu_n)=I(\mu)=\infty$.
Else, suppose $I(\mu)<\infty$ and $I(\mu_n)<\infty$ for any $n$ (the claim follows immediately otherwise). Then there is some 
$M>0$ such that $\liminf \int xd\mu_n\leq M$ ( otherwise we get from the previous case $\liminf_{n\rightarrow\infty} I(\mu_n)=\infty$). 

Notice in one hand that $\nu \mapsto \int x^3d\nu$ is continuous on the set $A_M:=\{(u_i)_{i\in \Z}, \sum_{i\in \Z}u_i\leq M \}$,  we also prove that $I(\cdot)$
is the sum of the continuous function $\nu \rightarrow -\frac{1}{24}\int x^3d\nu+\frac{\theta^3}{6}$  with the lower semi continuous one 
and on the other hand the following map is lower semi-continuous on $A_M$
$$
J:\nu \mapsto \frac{1}{6}(\int x d\nu-\theta)^31_{(\int x d\nu\geq \theta)}.
$$ 
Indeed by Fatou lemma's, for any sequence $\nu_n$ of elements of $A_M$ converging to $\nu\in A_M$, $\liminf_{n\rightarrow\infty}\int xd\nu_n \geq \int xd\mu$ and since $x \mapsto x1_{x\geq 0}$ is a nonegative continuous map, we get by composition that the map $\nu \mapsto \frac{1}{6}(\int xd\nu-\theta)^31_{(\int x d\nu \geq \theta)}$ is lower semi-continuous. Thus $I(\cdot)$ is semicontinuous as the sum of $J$ with the continuous map $\nu \rightarrow -\frac{1}{24}\int x^3d\nu+\frac{\theta^3}{6}$, which proves our assumption.
\end{proof}


We give here a short overview with comparisons between the topology used in \cite{Pu05} and the one we introduced.
In  \cite{Pu05} Puhalskii identifies the sizes of connected components of a realisation of the  \ER- graph $\mathcal G(n,p)$ with an element in 
$\breve{\mathbb S}:=\{(u_i)_{i\geq 0}\in \mathbb R_+^\N, u_i\geq u_{i+1},\}$. The sequence $((nb_n)^{2/3}u_i)_{i\in \N}$  would then represent the set of connected components of the graph in decreasing order. Puhalskii's statement in \cite[Theorem 2.4]{Pu05} is a LDP satisfied by the connected component of a graph as represented on the topological space $\breve {\mathbb S}$ with the product topology (topology with respect to componentwise convergence).  
This topological space  $\breve {\mathbb S}$ can be embedded into a subset of $\overline{\M_{\N}(0,\infty)}$  through the following map $F: \breve{\mathbb S}\mapsto \overline{\M_{\N}(0,\infty)}$ defined by
$$
F(u):=\sum_{i\geq 0}\delta_{u_i},
$$
where  $\overline{\M_{\N}(0,\infty)}$ is the completion of $\M_{\N}(0,\infty)$ for the distance $dm$.
Now we prove that theorem \ref{thmweakldp} can be extended to  $\overline{\M_{\N}(0,\infty)}$ and thus comprises the statement  \cite[Theorem 2.4]{Pu05}  whenever $(b_n)_{n\in \N}$ satisfies Hypothesis \ref{hypbn}.

\begin{lemma}\label{lemcomplete}
The law of the random variable $\Me_n(L^n)$ under $\mathbb{P}_{n,p_n(\theta)}$ 
satisfies a \textit{large deviation principle} with speed $b_n^2$ and rate function $I :\overline{\M_\N(0,\infty)} \mapsto \bar{\mathbb{R}}_+$ defined by
\begin{equation*}
    I(\mu):=-\frac{1}{24}\int_0^\infty x^3d\mu+\frac{1}{6}(\int_0^\infty xd\mu-\theta)^31_{(\int_0^\infty xd\mu \geq \theta)}+\frac{\theta^3}{6},
\end{equation*}
whenever the integral $\int_0^\infty xd\mu$ is finite and $I(\mu)=\infty$ otherwise.    
\end{lemma}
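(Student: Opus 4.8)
The plan is to deduce Lemma~\ref{lemcomplete} from Theorem~\ref{thmweakldp} by a soft topological argument, namely the contraction principle applied to the canonical inclusion $\iota\colon \M_{\N}(0,\infty)\hookrightarrow\overline{\M_{\N}(0,\infty)}$, followed by an identification of the transported rate function with the formula in the statement. The only point that really needs checking is that the \emph{new} points of the completion, i.e.\ those in $\overline{\M_{\N}(0,\infty)}\setminus\M_{\N}(0,\infty)$, all carry infinite first moment, so that the stated formula assigns them the value $+\infty$, which is exactly what the contraction principle produces there.

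First I would recall that, by construction, $\overline{\M_{\N}(0,\infty)}$ is the metric completion of $\M_{\N}(0,\infty)$ for the distance $dm$, so $\iota$ is an isometric embedding; in particular it is continuous and a homeomorphism onto its (dense) image. By Proposition~\ref{propsemicon}, $I$ is a good rate function on $\M_{\N}(0,\infty)$, and by Theorem~\ref{thmweakldp} the law of $\Me_n(L^n)$ under $\PP_{n,p_n(\theta)}$ satisfies an LDP on $\M_{\N}(0,\infty)$ with speed $b_n^2$ and rate $I$. Since $\Me_n(L^n)$ takes values in $\M_{\N}(0,\infty)$, its law on $\overline{\M_{\N}(0,\infty)}$ is precisely the pushforward of its law under $\iota$. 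The contraction principle, applied with the continuous map $\iota$ and the good rate function $I$, then yields that this pushforward satisfies an LDP on $\overline{\M_{\N}(0,\infty)}$ with speed $b_n^2$ and good rate function
\begin{equation*}
\tilde I(\mu):=\inf\{I(\nu)\colon \nu\in\M_{\N}(0,\infty),\ \iota(\nu)=\mu\}=
\begin{cases}
I(\mu) & \mu\in\M_{\N}(0,\infty),\\
+\infty & \mu\in\overline{\M_{\N}(0,\infty)}\setminus\M_{\N}(0,\infty),
\end{cases}
\end{equation*}
with the convention $\inf\emptyset=+\infty$.

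It remains to show that $\tilde I$ coincides with the function $I$ written in the statement, i.e.\ that every $\mu\in\overline{\M_{\N}(0,\infty)}\setminus\M_{\N}(0,\infty)$ satisfies $\int_0^\infty x\,d\mu=\infty$. Here I would use the description of convergence in $dm$ already exploited in the proof of Proposition~\ref{prop:compact_sets}: writing measures through their ordered atoms $(u_i)_{i\in\Z}\in\overline{\mathbb R}_+^{\Z}$, convergence in $dm$ is, up to a shift of indices, componentwise convergence in $[0,\infty]$. Hence any $dm$-Cauchy sequence in $\M_{\N}(0,\infty)$ has ordered atoms converging componentwise to some $(u_i)_{i\in\Z}\in\overline{\mathbb R}_+^{\Z}$ with the same relative order, and the limit point is the formal measure $\mu=\sum_{i\in\Z}\delta_{u_i}$. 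If $\int_0^\infty x\,d\mu=\sum_{i\in\Z}u_i<\infty$, then a fortiori the $\sigma$-finiteness condition \eqref{eqcondsigmfini} holds for every $C>0$, so $\mu\in\M_{\N}(0,\infty)$. Contrapositively, any $\mu\in\overline{\M_{\N}(0,\infty)}\setminus\M_{\N}(0,\infty)$ must have $\sum_{i\in\Z}u_i=\int_0^\infty x\,d\mu=\infty$, on which the formula of the lemma gives $+\infty$, matching $\tilde I$. This proves $\tilde I\equiv I$ on $\overline{\M_{\N}(0,\infty)}$ and completes the argument.

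The main obstacle, such as it is, lies in this last step: one has to argue cleanly about the metric completion — in particular to verify that the minimum over shifts in the definition of $dm$ does not introduce spurious limit points and that componentwise convergence of the sorted atoms is indeed the correct notion of convergence — and then to check that a limiting atom configuration with finite total mass is automatically locally finite, hence already an element of $\M_{\N}(0,\infty)$. Everything else is an immediate application of the contraction principle together with the goodness of $I$ established in Proposition~\ref{propsemicon}.
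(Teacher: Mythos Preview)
Your proposal is correct and takes a genuinely different route from the paper. The paper does \emph{not} invoke the contraction principle: instead it proves the stronger uniform statement
\[
\lim_{\delta\to 0}\ \inf_{\nu\in B_\delta(\mu)}\int x\,d\nu=+\infty
\qquad\text{for }\mu\in\overline{\M_{\N}(0,\infty)}\setminus\M_{\N}(0,\infty),
\]
and then uses this, together with Lemma~\ref{lem:macro} and Lemma~\ref{lem:meso}~\textit{i)}, to verify by hand that $\frac{1}{b_n^2}\log\PP_{n,p}(\Me_n(L^n)\in B_\delta(\mu))\to-\infty$ at every new point of the completion. The uniform blow-up is obtained by choosing $f\in C_c(0,\infty)$ with $\int f\,d\mu=\infty$, using the vague continuity of $\nu\mapsto\int f\,d\nu$, and the elementary bound $\int f\,d\nu\le\frac{\|f\|_\infty}{\epsilon}\int x\,d\nu$.

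Your contraction-principle argument is cleaner: since $I$ is good on $\M_{\N}(0,\infty)$ (Proposition~\ref{propsemicon}) and the inclusion is an isometry, the LDP transports automatically, and you only need the \emph{pointwise} fact $\int x\,d\mu=\infty$ at new points to match the formula, not the uniform version the paper proves. The price you pay is exactly the obstacle you identify at the end: you must make precise the identification of abstract completion points with atom sequences $(u_i)_{i\in\Z}$, so that the expression $\int x\,d\mu$ is meaningful there. The paper sidesteps this by working entirely with $\nu\in B_\delta(\mu)\cap\M_{\N}(0,\infty)$, where integrals are unambiguous, and never needs to integrate against the limit object itself. Either way the substantive content is the same claim about first moments at the boundary; your packaging via contraction is more economical, while the paper's route keeps closer to the estimates already developed in Section~\ref{sec:lemmas_meso_macro}.
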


\begin{proof}
To prove it we only need to show that $\int xd\mu=\infty$ for any $\mu \in \overline{\M_{\N}(0,\infty)}\backslash \M_{\N}(0,\infty)$. Actually we need to prove a bit more, we will prove that
\begin{align}\label{eqmominf}
\lim_{\delta \to 0} \inf_{\nu \in B_\delta(\mu)}\int xd\nu =+\infty.    
\end{align}
Then $I(.)$ still extends onto  $\overline{\M_{\N}(0,\infty)}$ as a good rate function.  Lemma \ref{lem:meso} and  Lemma~\ref{lem:macro} ensure that $\lim_{\delta \to 0}\frac{1}{b_n^2}\log\left(\PP_{n,p}(L^N\in B_\delta(\mu))\right)=-\infty=-I(\mu)$  which completes the proof of Lemma \ref{lemcomplete}. \\
We now prove the limit \eqref{eqmominf}. Let  $\mu \in \overline{\M_{\N}(0,\infty)}\backslash \M_{\N}(0,\infty)$, by definition of $\M_{\N}(0,\infty)$, there is $f\in C_c(0,\infty)$ such that $\int fd\mu =\infty$. We denote by $[\epsilon,C]$ the support of $f$ with $\epsilon,C>0$. Given that the following holds
\begin{align}\label{eqinfini}
    \int f d\mu \leq \frac{\|f\|_\infty}{\epsilon} \int x d\mu,
\end{align}
we deduce that $\int x d\mu=\infty$.
We then deduce from the continuity of $\nu \in \overline{M}_{\N}(0,\infty) \mapsto \int f d\nu$ that $\lim_{\delta \to 0}\inf_{\nu \in B_\delta(\mu)}\int fd\nu=\infty$ and thus from \ref{eqinfini},  we get the limit \eqref{eqmominf}.
\end{proof}

\begin{acks}
We acknowledge the support of Fondazione CR di Firenze  via  FABuLOuS project within ``Bando Ricercatori a Firenze'',  and the partial support of the INDAM GNAMPA Project, codice CUP\_E53C22001930001.
\end{acks}



\bibliographystyle{amsplain}
\bibliography{biblio_graph}

\end{document}